%
%
%


\documentclass{amsart}
\usepackage{amssymb}
\usepackage{vaucanson-g}
\usepackage{pstricks} 
\usepackage{verbatim}

\newcommand{\ra}{\rightarrow}

\newcommand{\ul} {\underline}
\newcommand{\bl} {\begin{lemma}}
\newcommand{\el} {\end{lemma}}
\newcommand{\bt} {\begin{theorem}}
\newcommand{\et} {\end{theorem}} 

\newcommand{\Tkk} {\mathcal T _{k+1}}
\newcommand{\mc}{\mathcal}
\newcommand{\limn}{\lim_{n \ra \infty}}
\newcommand{\limk}{\lim_{k \ra \infty}}

\newcommand{\Tk}{\mathcal T _k}

\newcommand {\IR}{\mathbb R}
\newcommand {\IN}{\mathbb N}

\newcommand{\htop} {h_{top}}
\newcommand{\bp}{\begin{proof}}
\newcommand{\ep}{\end{proof}}
\newcommand {\be}{\begin{equation}}
\newcommand  {\ee} {\end{equation}}
\newcommand  {\beq} {\begin{eqnarray*}}
\newcommand  {\eeq} {\end{eqnarray*}}

\newcommand  {\bd} {\begin{definition}}
\newcommand  {\ed} {\end{definition}}

\newcommand{\dimh}{\mbox{Dim}_H}
\newcommand{\CU}{\mathcal U}

\newtheorem{theorem}{Theorem}[section]
\newtheorem{lemma}[theorem]{Lemma}
\newtheorem*{main}{Main Result 1}
\newtheorem*{main2}{Main Result 2}
\theoremstyle{definition}
\newtheorem{definition}[theorem]{Definition}

\theoremstyle{remark}
\newtheorem{remark}[theorem]{Remark}
\newtheorem*{Remarkstar}{Remark}
\numberwithin{equation}{section}

\begin{document}

\title[Irregular sets, the $\beta$-transformation and almost specification]{Irregular sets, the $\beta$-transformation and the almost specification property}


\author{Daniel J. Thompson}
\address{Department of Mathematics, The Pennsylvania State University, University Park, PA 16802, USA}
\email{thompson@math.psu.edu}


\subjclass[2000]{Primary 37C45}



\begin{abstract}
Let $(X,d)$ be a compact metric space, $f:X \mapsto X$ be a continuous map satisfying a property we call almost specification (which is slightly weaker than the $g$-almost product property of Pfister and Sullivan), and $\varphi: X \mapsto \IR$ be a continuous function. We show that the set of points for which the Birkhoff average of $\varphi$ does not exist (which we call the irregular set) is either empty or has full topological entropy. Every $\beta$-shift satisfies almost specification and we show that the irregular set for any $\beta$-shift or $\beta$-transformation is either empty or has full topological entropy and Hausdorff dimension.
\end{abstract}

\maketitle
\section{Introduction}
For a compact metric space $(X, d)$, a continuous map $f:X \mapsto X$ and a continuous function $\varphi: X \mapsto \IR$, we define the irregular set for $\varphi$ to be
\[
\widehat X(\varphi, f) := \left \{ x \in X : \lim_{n \ra \infty} \frac{1}{n} \sum_{i = 0}^{n-1} \varphi (f^i (x)) \mbox{ does not exist } \right \}.
\]
The irregular set arises naturally in the context of multifractal analysis. As a consequence of Birkhoff's ergodic theorem, the irregular set is not detectable from the point of view of an invariant measure. However, it is an increasingly well known phenomenon that the irregular set can be large from the point of view of dimension theory \cite{Bad}. Symbolic dynamics methods have confirmed this in the uniformly hyperbolic setting \cite{BS}, for certain non-uniformly hyperbolic examples \cite{PW} and for a large class of multimodal maps \cite{To}. The irregular set has also been the focus of a great deal of work by Olsen and collaborators, for example in \cite{Ol2} and \cite{OW}. The papers \cite{Tho4, EKL} studied the irregular set for maps with the specification property using a non-symbolic method inspired by the work of Takens and Verbitskiy \cite{TV}. 

Ruelle uses the terminology `set of points with historic behaviour' to describe the irregular set \cite{Ruelle}. The idea is that points for which the Birkhoff average does not exist are capturing the `history' of the system, whereas points whose Birkhoff average converge only see average behaviour. For example, in the dynamics of the weather, the irregular points are the ones that have observed epochs of climate change. In \cite{Takens}, in contrast to the dimension-theoretic point of view, Takens asks for which smooth dynamical systems the irregular set has positive Lebesgue measure. 

Our current work, which takes the dimension-theoretic viewpoint, is inspired by a recent innovation of Pfister and Sullivan \cite{PfS}, who have introduced a weak specification property called the $g$-almost product property, which we take the liberty of renaming as the almost specification property. 
They extended the variational principle of Takens and Verbitskiy \cite{TV} to the class of maps which satisfy this property. This variational principle is a key tool in multifractal analysis and has been extended by the author in a different direction in \cite{Tho6}. 

A striking application of the almost specification property is that it applies to every $\beta$-shift. In sharp contrast, the set of $\beta$ for which the $\beta$-shift has the specification property has zero Lebesgue measure \cite{Bu}, \cite{Schm}. The $\beta$-shift is the natural symbolic space associated to the $\beta$-transformation $f_\beta (x) = \beta x \mod 1$, $\beta \in (1, \infty), x \in [0,1)$. The $\beta$-transformation has been widely studied since its introduction in 1957 by Renyi \cite{Ren}. The sustained interest in the study of the $\beta$-tranformation arises from its connection with number theory and its special role as a model example of a one-dimensional expanding dynamical system which admits discontinuities. 

\begin{main}
When $f$ satisfies the almost specification property, the irregular set is either empty or has full topological entropy. 
\end{main}
\begin{main2}
The irregular set for an arbitrary $\beta$-transformation (or $\beta$-shift) is either empty or has full entropy $\log \beta$ and Hausdorff dimension $1$.
\end{main2}
Our second main result (stated formally as theorem \ref{theorem.a.0.2} and theorem \ref{theorem.ae.beta}) is a corollary of our first main result (stated formally as theorem \ref{theorem.a.0.1}). We work through the application to the $\beta$-shift carefully.  We emphasise that our first main result applies to many systems other than $\beta$-transformations. For example, every ergodic toral automorphism has almost specification. This is a special case of the result that every automorphism of a compact metric abelian group that is ergodic (with respect to Haar measure) and has finite topological entropy has almost specification \cite{Mar, Dat, Ya}. This provides another important class of examples where our first main theorem yields new results.

To undertake the proof of our first main result, we develop notions of almost spanning sets, strongly separated sets and a generalised version of the Katok formula for entropy. These should be of independent interest.  We also include an alternative proof of our second main result based on approximating the  $\beta$-shift from inside by shifts of finite type. We discuss the merits of the two different approaches in \S \ref{sfc}.

In \cite{Tho4}, we showed that when $f$ has the specification property, the irregular set is either empty or has full topological pressure. The method of this paper can be used to show that this more general result holds true in the almost specification setting. We restrict ourselves to the special case of entropy for clarity and brevity.

In \S \ref{beta.2}, we give definitions. In \S \ref{beta.1.2} we establish our general version of the Katok formula for entropy. In \S \ref{beta.3}, we prove our first main result. In \S \ref{beta.4}, we consider arbitrary  $\beta$-shifts and $\beta$-transformations and establish our second main result.
\begin{Remarkstar}
I am grateful to a referee of this paper who informed me of the work of Durner, which I was not aware of when this work was completed. Our result on $\beta$-shifts,  although not stated explicitly in Durner's work, can be obtained as a corollary of \cite[Theorem 4]{Dur}. Durner's work does not apply in a general setting but contains some very interesting theorems concerning $\beta$-shifts. Durner's proof implicitly uses the almost specification property of the $\beta$-shift and may be the first work in which this property was exploited. 
\end{Remarkstar}

\section{Definitions and preliminaries} \label{beta.2}
Let $(X,d)$ be a compact metric space and $f:X \mapsto X$ a continuous map 
Let $C(X)$ denote the space of continuous functions from $X$ to $\IR$, and $\varphi \in C(X)$. Let $S_n \varphi (x) := \sum_{i = 0}^{n-1} \varphi (f^i (x))$. Let $\mathcal{M}_{f} (X)$ denote the space of $f$-invariant probability measures and $\mathcal{M}^e_{f} (X)$ denote those which are ergodic.
\subsection{The irregular set}
We define the irregular set for $\varphi$ to be
\be \label{ir}
\widehat X(\varphi, f) := \left \{ x \in X : \lim_{n \ra \infty} \frac{1}{n} \sum_{i = 0}^{n-1} \varphi (f^i (x)) \mbox{ does not exist } \right \}.
\ee
For a constant $c$, let $\mbox{Cob} (X, f, c)$ denote the space of functions cohomologous to $c$ and $\overline {\mbox{Cob} (X, f, c)}$ be the closure of $\mbox{Cob} (X, f, c)$ in the sup norm. In \cite{Tho4}, we proved the following lemma. 
\bl \label{equiv}
The following are equivalent for $\varphi \in C(X)$:


(a) $\frac{1}{n}S_n \varphi$ does not converge pointwise to a constant;

(b) $\inf_{\mu \in \mathcal{M}_{f} (X)} \int \varphi d \mu < \sup_{\mu \in \mathcal{M}_{f} (X)} \int \varphi d \mu$;

(c) $\inf_{\mu \in \mathcal{M}^e_{f} (X)} \int \varphi d \mu < \sup_{\mu \in \mathcal{M}^e_{f} (X)} \int \varphi d \mu$;

(d) $\varphi \notin \bigcup_{c \in \IR} \overline {\mbox{Cob} (X, f, c)}$;

(e) $\frac{1}{n}S_n \varphi$ does not converge uniformly to a constant.
\el
We also proved that if $\widehat X(\varphi, f)$ is non-empty, then the properties of lemma \ref{equiv} hold. It is a corollary of our main result (theorem \ref{theorem.a.0.1}) that when $f$ has the almost specification property, (b) implies that $\widehat X(\varphi, f)$ is non-empty.
\subsection{The almost specification property}
Pfister and Sullivan have introduced a property called the $g$-almost product property. We take the liberty of renaming this property as the almost specification property. The almost specification property can be verified for every $\beta$-shift (see \S \ref{beta.4.1}). The version we use here is a priori weaker than that in \cite{PfS} and is slightly weaker than the almost product property of \cite{Ya}. First we need an auxiliary definition.
\begin{definition}
Let $\epsilon_0 >0$. A function $g: \IN \times (0, \epsilon_0) \mapsto \IN$ is called a mistake function if for all $\epsilon \in(0, \epsilon_0)$ and all $n \in \IN$, $g(n, \epsilon) \leq g(n+1, \epsilon)$ and 
\[
\limn \frac{g(n, \epsilon)}{n} = 0.
\]
Given a mistake function $g$, if $\epsilon > \epsilon_0$, we define $g(n, \epsilon) := g(n, \epsilon_0)$. 
\end{definition}
We note that for fixed $k \in \IN$ and $\lambda >0$, if $g$ is a mistake function, then so is $h$ defined by $h(n, \epsilon)=kg(n, \lambda \epsilon)$. 
\begin{definition}
For $n,m \in \IN, m<n$, we define the set of $(n, -m)$ index sets to be
\[
I(n,-m) := \{ \Lambda \subseteq \{0, \ldots, n-1 \}, \#\Lambda \geq n - m\}.
\]
Let $g$ be a mistake function and $\epsilon > 0$. For $n$ sufficiently large so that $g(n, \epsilon) <n$, we define the set of $(g, n, \epsilon)$ index sets to be $I(g;n, \epsilon) := I(n, -g(n, \epsilon))$. Equivalently, 
\[
I(g;n, \epsilon) := \{ \Lambda \subseteq \{0, \ldots, n-1 \}, \# \Lambda \geq n - g(n, \epsilon)\}.
\] 
For a finite set of indices $\Lambda$, we define
\[
d_\Lambda(x, y) = \max \{d(f^j x, f^j y) : j \in \Lambda \} \mbox{ and } B_\Lambda (x, \epsilon) = \{ y \in X : d_\Lambda(x, y) < \epsilon\}.
\]
\end{definition}
\begin{definition}
When $g(n, \epsilon) < n$, we define a `dynamical ball of radius $\epsilon$ and length $n$ with $g(n, \epsilon)$ mistakes'. Let 
\beq
B_n (g; x, \epsilon) &:=& \{ y \in X : y \in B_\Lambda (x, \epsilon) \mbox{ for some } \Lambda \in I(g;n, \epsilon)\}\\
&=& \bigcup_{\Lambda \in I(g;n, \epsilon)} B_\Lambda (x, \epsilon)
\eeq
\end{definition}
\begin{definition} 
A continuous map $f: X \mapsto X$ satisfies the almost specification property if there exists a mistake function $g$ such that for any $\epsilon_1, \ldots ,\epsilon_k >0$, there exist integers $N(g,\epsilon_1), \ldots, N(g,\epsilon_k)$ such that for any $x_1, \ldots, x_k$ in $X$ and integers $n_i \geq N(g, \epsilon_i)$,
\[
\bigcap_{j=1}^k f^{-\sum_{i=0}^{j-1} n_i} B_{n_j} (g; x_j, \epsilon_j) \neq \emptyset,
\]
where $n_0 =0$.
\end{definition}
\begin{remark}
The function $g$ can be interpreted as follows. The integer $g(n, \epsilon)$ tells us how many mistakes we are allowed to make when we use the almost specification property to $\epsilon$ shadow an orbit of length $n$. Henceforth, we assume for convenience and without loss of generality that $N(g, \epsilon)$ is chosen so that $g(n, \epsilon)/n < 0.1$ for all $n\geq N(g, \epsilon)$.
\end{remark}
\begin{remark}
Pfister and Sullivan use a slightly different definition of mistake function (which they call a blowup function). They do not allow $g$ to depend on $\epsilon$.  An example of a function which is a mistake function under our definition but is not considered by Pfister and Sullivan is $g(n, \epsilon) = \epsilon^{-1} \log n$.  
Since we allow a larger class of mistake functions, the almost specification property defined here is slightly more general than the $g$-almost product property of Pfister and Sullivan. 
\end{remark}
\begin{definition} \label{3a}
A continuous map $f: X \mapsto X$ satisfies the specification property if for all $\epsilon > 0$, there exists an integer $m = m(\epsilon )$ such that for any collection $\left \{ I_j = [a_j, b_j ] \subset \IN : j = 1, \ldots, k \right \}$ of finite intervals with $a_{j+1} - b_j \geq m(\epsilon ) \mbox{ for } j = 1, \ldots, k-1 $ and any $x_1, \ldots, x_k$ in $X$, there exists a point $x \in X$ such that
\begin{equation} \label{3a.02}
d(f^{p + a_j}x, f^p x_j) < \epsilon \mbox{ for all } p = 0, \ldots, b_j - a_j \mbox{ and every } j = 1, \ldots, k.
\end{equation}
\end{definition}
Pfister and Sullivan showed that the specification property implies the almost specification property \cite{PfS} using ANY blow-up function $g$. To see the relation between the two concepts, we note that if $f$ has specification and we set $g(n, \epsilon) = m(\epsilon)$ for all $n$ larger than $m(\epsilon)$ and set $N(g, \epsilon) = m(\epsilon) +1$, then for any $x_1, \ldots, x_k$ in $X$ and integers $n_i \geq N(g, \epsilon)$, we have
\[
\bigcap_{j=1}^k f^{-\sum_{i=0}^{j-1} n_i} B_{n_j} (g; x_j, \epsilon) \neq \emptyset.
\]
The trick required to replace $\epsilon$ by $\epsilon_1, \ldots, \epsilon_k$ can be found in \cite{PfS}.

\begin{remark}
In \cite{Ya}, Yamamoto studies the relationship between a number of weak forms of the specification property, including the Pfister and Sullivan definition.  We note that our definition of almost specification is different from all the properties defined in \cite{Ya}, and the almost specification property of \cite[Definition 2.1]{Ya} is a special case of ours.

\end{remark}
\section{A modified Katok entropy formula} \label{beta.1.2}
The following definitions of `strongly separated' and `almost spanning' sets are inspired by Pfister and Sullivan and designed for use in the setting of maps with the almost specification property.
\begin{definition}
Let $Z \subseteq X$. For $m <n$, a set $S$ is $(n, -m, \epsilon)$ separated for $Z$ if $S \subset Z$ and for every $\Lambda \in I(n, -m)$, we have $d_\Lambda(x,y) > \epsilon$ for every $x,y \in S$. We define a set $S$ to be $(g; n, \epsilon)$ separated if it is $(n, -g(n, \epsilon), \epsilon)$ separated. Equivalently, $S$ is $(g; n, \epsilon)$ separated if for every $x,y \in S$
\[
\# \{j \in \{0, \ldots n-1\} : d(f^jx, f^jy) > \epsilon \} >g(n, \epsilon).
\]
\end{definition}
We think of an $(n, -m, \epsilon)$ separated set to be `a set which remains $(n, \epsilon)$ separated when you permit $m$ mistakes'. In particular, a set $S$ which is $(g; n, \epsilon)$ separated is $(n, \epsilon)$ separated in the usual sense. We define the natural dual notion of a $(g; n, \epsilon)$ spanning set.
\begin{definition}
For $m <n$, a set $S \subset Z$ is $(n, -m, \epsilon)$ spanning for $Z$ if for all $x \in Z$, there exists $y \in S$ and $\Lambda \in I(n, -m)$ such that $d_\Lambda (x, y) \leq \epsilon$. Note that $\Lambda$ depends on $x$ and an $(n, \epsilon)$ spanning set is always $(n, -m, \epsilon)$ spanning.
We define a set $S$ to be $(g; n, \epsilon)$ spanning if it is $(n, -g(n, \epsilon), \epsilon)$ spanning. 
\end{definition}

We think of an $(n, -m, \epsilon)$ spanning set to be `a set which requires up to $m$ mistakes to be $(n, \epsilon)$ spanning'. Let
\[
s_n(g;Z, \epsilon) = \sup \{ \#S : S \mbox{ is $(g;n,\epsilon)$ separated for $Z$}\},
\]
\[
r_n(g;Z, \epsilon) = \inf \{ \#S : S \mbox{ is $(g;n,\epsilon)$ spanning for $Z$}\},
\]
\[
s_n(Z, \epsilon) = \sup \{ \#S : S \mbox{  is $(n,\epsilon)$ separated for $Z$}\},
\]
\[
r_n(Z, \epsilon) = \inf \{ \#S : S \mbox{ is $(n,\epsilon)$ spanning for $Z$}\}.
\]
\begin{lemma}
Let $g$ be any mistake function and let $h(n, \epsilon) = 2 g(n, \epsilon/2)$. We have

(1) $r_n(g;Z, \epsilon) \leq s_n(g;Z, \epsilon) \leq s_n(Z, \epsilon)$,

(2) $s_n(h;Z, 2\epsilon) \leq r_n(g; Z, \epsilon) \leq r_n (Z, \epsilon) \leq s_n (Z, \epsilon)$. 
\end{lemma}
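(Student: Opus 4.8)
The plan is to treat these as the mistake-function analogues of the classical comparisons between spanning and separating numbers. The workhorse throughout is the reformulation already recorded in the text: over the index range $\{0,\ldots,n-1\}$, the statement that $d_\Lambda(x,y) > \epsilon$ for every $\Lambda \in I(g;n,\epsilon)$ is equivalent to $\#\{ j : d(f^j x, f^j y) > \epsilon\} > g(n,\epsilon)$. Indeed, some $\Lambda$ of size at least $n - g(n,\epsilon)$ can avoid the bad coordinate set $B = \{ j : d(f^j x, f^j y) > \epsilon\}$ precisely when $\#B \leq g(n,\epsilon)$, so $B$ meets every such $\Lambda$ if and only if $\#B > g(n,\epsilon)$. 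I would establish this equivalence first, since it lets me pass freely between the index-set formulation (convenient for spanning sets) and the counting formulation (convenient for separated sets).

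The three monotonicity inequalities are then immediate. For $s_n(g;Z,\epsilon) \leq s_n(Z,\epsilon)$, a $(g;n,\epsilon)$-separated set has, for each pair, more than $g(n,\epsilon) \geq 0$ coordinates at distance exceeding $\epsilon$, hence at least one, so it is $(n,\epsilon)$-separated in the usual sense. For $r_n(g;Z,\epsilon) \leq r_n(Z,\epsilon)$, an $(n,\epsilon)$-spanning set is $(g;n,\epsilon)$-spanning: for any $x$ the witnessing $y$ satisfies $d(f^jx,f^jy)\leq\epsilon$ on all of $\{0,\ldots,n-1\}$, so one may take $\Lambda=\{0,\ldots,n-1\}\in I(g;n,\epsilon)$. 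Finally $r_n(Z,\epsilon)\leq s_n(Z,\epsilon)$ is the classical fact that a maximal $(n,\epsilon)$-separated set is automatically $(n,\epsilon)$-spanning.

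For $r_n(g;Z,\epsilon) \leq s_n(g;Z,\epsilon)$ I would take a $(g;n,\epsilon)$-separated set $S$ of maximal cardinality and show it is $(g;n,\epsilon)$-spanning. If it were not, there would be $x \in Z$ with $d_\Lambda(x,y) > \epsilon$ for every $y \in S$ and every $\Lambda \in I(g;n,\epsilon)$; by the equivalence above this says $\#\{ j : d(f^jx,f^jy) > \epsilon\} > g(n,\epsilon)$ for every $y \in S$, so $S \cup \{x\}$ is still $(g;n,\epsilon)$-separated, contradicting maximality. Hence $r_n(g;Z,\epsilon) \leq \#S = s_n(g;Z,\epsilon)$.

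The substantive inequality is $s_n(2g;Z,2\epsilon) \leq r_n(g;Z,\epsilon)$, which I would prove by injecting a $(2g;n,2\epsilon)$-separated set $S$ into an arbitrary $(g;n,\epsilon)$-spanning set $T$. For each $x \in S$ choose $\phi(x) \in T$ and $\Lambda_x \in I(g;n,\epsilon)$ with $d(f^jx, f^j\phi(x)) \leq \epsilon$ for all $j \in \Lambda_x$, so that the complement satisfies $\#\Lambda_x^c \leq g(n,\epsilon)$. If $\phi(x) = \phi(x')$ with $x \neq x'$, then on $\Lambda_x \cap \Lambda_{x'}$ the triangle inequality gives $d(f^jx, f^jx') \leq 2\epsilon$, while inclusion–exclusion gives $\#(\Lambda_x \cap \Lambda_{x'})^c \leq \#\Lambda_x^c + \#\Lambda_{x'}^c \leq 2g(n,\epsilon)$. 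Thus the number of coordinates at which $x$ and $x'$ are more than $2\epsilon$ apart is at most $2g(n,\epsilon)$, contradicting that $S$ is separated at radius $2\epsilon$ with the doubled mistake allowance. This forces $\phi$ to be injective, whence $\#S \leq \#T$ and the claim follows by taking suprema and infima. I expect the main obstacle to be exactly this last step: one must verify that the separation parameter genuinely dominates the bound $2g(n,\epsilon)$ produced by the union of the two spanning complements, and simultaneously absorb the factor of $2$ coming from the triangle inequality into the radius. This bookkeeping is precisely why the statement doubles \emph{both} the mistake function and the radius; invoking the earlier remark that $2g(n,\tfrac{1}{2}\,\cdot\,)$ is again a mistake function ensures the doubled allowance is correctly calibrated to the $2g(n,\epsilon)$ bound, and I would make this matching of parameters fully explicit.
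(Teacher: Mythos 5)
Your proof is correct and follows essentially the same route as the paper: you obtain $r_n(g;Z,\epsilon)\leq s_n(g;Z,\epsilon)$ by showing a maximal $(g;n,\epsilon)$-separated set must be $(g;n,\epsilon)$-spanning, and $s_n(2g;Z,2\epsilon)\leq r_n(g;Z,\epsilon)$ by injecting the separated set into an arbitrary spanning set via the intersection $\Lambda_x\cap\Lambda_{y}\in I(2g;n,\epsilon)$, exactly as in the text. The only differences are cosmetic --- you work in the counting formulation (whose equivalence with the index-set formulation you rightly verify, where the paper simply asserts it) and conclude injectivity by the triangle inequality at $\phi(x)=\phi(x')$ rather than the paper's $d_\Lambda(\phi(x),\phi(y))>0$ --- and your explicit calibration of $2g(n,2\epsilon)$ against the $2g(n,\epsilon)$ bound produced by the two spanning complements addresses a parameter-matching point the paper's own proof passes over silently.
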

\bp
Suppose that $S$ is a $(g;n,\epsilon)$ separated set for $Z$ of maximum cardinality such that $S$ is not $(g;n,\epsilon)$ spanning. 
We can find
\[
z \in Z \setminus \bigcup_{\Lambda \in I(g;n, \epsilon)} \bigcup_{x \in S} \overline{B_\Lambda(x, \epsilon)} = \bigcap_{\Lambda \in I(g;n, \epsilon)} \left( Z \setminus \bigcup_{x \in S} \overline{B_\Lambda(x, \epsilon)} \right ).
\]
Since $d_\Lambda (x,z) > \epsilon$ for all $x \in S$ and $\Lambda \in I(g;n, \epsilon)$, then $S \cup \{z\}$ is a $(g;n,\epsilon)$ separated set, which contradicts the maximality of $S$. Thus, every $(g;n,\epsilon)$ separated set of maximal cardinality is $(g;n,\epsilon)$ spanning.

For (2), suppose $E$ is $(h; n, 2\epsilon)$ separated and $F$ is $(g; n, \epsilon)$ spanning for $Z$. Define $\phi:E \mapsto F$ by choosing for each $x \in E$ some $\phi(x) \in F$ and some $\Lambda_x \in I(g;n, \epsilon)$ such that $d_{\Lambda_{x}} (x, \phi(x)) \leq \epsilon$. Suppose $x \neq y$. Let $\Lambda = \Lambda_{x} \cap \Lambda_{y}$. Since $\# \Lambda \geq n- 2g(n, \epsilon)$, then $\Lambda \in I(h;n, 2 \epsilon)$. We have $d_\Lambda(\phi(x), \phi(y)) > 0$ and thus $\phi(x) \neq \phi(y)$ . Thus $\phi$ is injective and hence $|E| \leq |F|$. 
\ep

\begin{theorem} [Modified Katok entropy formula] \label {theorem4}
Let $(X,d)$ be a compact metric space, $f:X \mapsto X$ be a continuous map and $\mu$ be an ergodic invariant measure. For $\gamma \in (0, 1)$ and any mistake function $g$, we have  
\[
h_\mu = \lim_{\epsilon \ra 0} \liminf_{n \ra \infty} \frac{1}{n} \log ( \inf \{ r_n(g; Z, \epsilon): Z \subset X, \mu(Z) \geq 1 - \gamma \}) 
\]
The formula remains true if we replace the $\liminf$ by $\limsup$ and/or $r_n(g; Z, \epsilon)$ by $s_n(g; Z, \epsilon)$. The value taken by the $\liminf$ can be bounded below independent of the choice of mistake function $g$.
\end{theorem}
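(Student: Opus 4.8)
The plan is to reduce everything to two one-sided estimates and then exploit the sandwich provided by the preceding lemma. For every $n$ we have
\be
s_n(2g; Z, 2\epsilon) \leq r_n(g; Z, \epsilon) \leq s_n(g; Z, \epsilon) \leq s_n(Z, \epsilon), \quad r_n(g;Z,\epsilon) \leq r_n(Z,\epsilon) \leq s_n(Z,\epsilon),
\ee
so that $r_n(g;Z,\epsilon)$ is the smallest and $s_n(Z,\epsilon)$ the largest of the four quantities appearing in the theorem, and in fact $r_n(g;Z,\epsilon) \leq q_n \leq s_n(Z,\epsilon)$ where $q_n$ is any of them. It therefore suffices to prove an \emph{upper} bound $\lim_{\epsilon \ra 0} \limsup_n \frac1n \log \inf\{ s_n(Z,\epsilon) : \mu(Z) \geq 1-\gamma\} \leq h_\mu$ for the classical separated numbers, and a \emph{lower} bound $\lim_{\epsilon \ra 0} \liminf_n \frac1n \log \inf\{ r_n(g;Z,\epsilon) : \mu(Z) \geq 1-\gamma\} \geq h_\mu$ for the smallest mistake quantity and \emph{every} mistake function $g$. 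Combining these with the displayed inequalities pins each of the four quantities, computed with either $\liminf$ or $\limsup$, to the common value $h_\mu$; moreover, since the lower bound is proved for an arbitrary $g$ while the upper bound does not involve $g$ at all, independence of the mistake function is automatic.

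The upper bound is the standard direction of the Katok entropy formula, which I will cite or reproduce as it requires no new idea. One fixes $\delta > 0$, a partition $\beta$ with $\mu(\partial\beta)=0$ and $h_\mu(f,\beta) > h_\mu - \delta$, and $\epsilon$ smaller than a scale $\rho$ with $\mu\{x : d(x,\partial\beta) < \rho\} < \delta$; an $(n,\epsilon)$-separated subset of a Shannon--McMillan--Breiman good set then injects, up to subexponential multiplicity, into the atoms of $\beta_n = \bigvee_{i=0}^{n-1} f^{-i}\beta$ carrying most of the measure, of which there are at most $e^{n(h_\mu+\delta)}$.

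The heart of the matter is the lower bound, where the mistakes must be controlled. Fix $\delta>0$, $\beta$ as above, $\rho$ with $\mu(\partial_\rho\beta) < \delta$ (writing $\partial_\rho \beta := \{x : d(x,\partial\beta) < \rho\}$), and $\epsilon < \rho$. By Shannon--McMillan--Breiman and the Birkhoff ergodic theorem there is a set $G_n$ with $\mu(G_n) \ra 1$ on which both $\mu(\beta_n(x)) \leq e^{-n(h_\mu(f,\beta)-\delta)}$ and $\#\{j<n : f^j x \in \partial_\rho\beta\} < \delta n$. Let $F \subseteq Z$ be $(g;n,\epsilon)$-spanning for some $Z$ with $\mu(Z) \geq 1-\gamma$; the balls $B_n(g;y,\epsilon)$, $y \in F$, then cover $Z$ and hence $Z \cap G_n$. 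For fixed $y$ and $x \in B_\Lambda(y,\epsilon) \cap G_n$ with $\Lambda \in I(g;n,\epsilon)$, every coordinate $j \in \Lambda$ with $f^j x \notin \partial_\rho\beta$ satisfies $f^j y \in B(f^j x, \epsilon) \subseteq \beta(f^j x)$, so the symbol $\beta(f^j x) = \beta(f^j y)$ is \emph{forced} by $y$. The unforced coordinates lie in $\Lambda^c$ (at most $g(n,\epsilon)$ of them) or satisfy $f^j x \in \partial_\rho\beta$ (at most $\delta n$ of them, since $x \in G_n$), so the $\beta_n$-atom of $x$ is determined by a choice of at most $g(n,\epsilon) + \delta n$ unforced positions together with the symbols placed there. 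Hence the number of atoms meeting $B_n(g;y,\epsilon) \cap G_n$ is at most
\be
(n+1)\binom{n}{\lceil g(n,\epsilon)+\delta n\rceil} (\#\beta)^{\lceil g(n,\epsilon)+\delta n\rceil},
\ee
whose exponential growth rate, using $g(n,\epsilon)/n \ra 0$, is $c(\delta) := H(\delta) + \delta\log\#\beta$ with $H$ the binary entropy function and $c(\delta) \ra 0$ as $\delta \ra 0$. Since each such atom has measure at most $e^{-n(h_\mu(f,\beta)-\delta)}$, summing over $y \in F$ gives $\mu(Z \cap G_n) \leq |F|\, e^{nc(\delta)+o(n)} e^{-n(h_\mu(f,\beta)-\delta)}$; as $\mu(Z \cap G_n)$ is bounded below by a positive constant for large $n$, we obtain $|F| \geq e^{n(h_\mu - 2\delta - c(\delta))} e^{-o(n)}$. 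Taking $\liminf_n$, then $\epsilon \ra 0$ (the bound is uniform in $\epsilon < \rho$), then $\delta \ra 0$, yields the lower bound $\geq h_\mu$.

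The main obstacle is precisely this measure estimate for the enlarged balls $B_n(g;y,\epsilon)$: permitting mistakes makes them strictly larger than ordinary Bowen balls, so Brin--Katok cannot be applied to them directly. The resolution is the observation that a mistake coordinate and a near-boundary coordinate play identical roles --- each frees up a single symbol --- and that the total number of freed symbols, $g(n,\epsilon)+\delta n$, is governed by the sublinearity of $g$ together with the $\delta n$ supplied by the ergodic theorem, so that the combinatorial cost $\binom{n}{g(n,\epsilon)+\delta n}$ has exponential rate tending to $0$ with $\delta$ while the mistake term contributes only $e^{o(n)}$. This is exactly why the entropy is insensitive to the choice of mistake function.
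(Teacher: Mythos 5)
Your proof has the same skeleton as the paper's: the easy inequality is delegated to the classical Katok formula via the monotonicity chain $r_n(g;Z,\epsilon)\le s_n(g;Z,\epsilon)\le s_n(Z,\epsilon)$ (the paper does exactly this), and the hard inequality comes from a Shannon--McMillan--Breiman plus Birkhoff plus Egorov good set, followed by counting partition atoms meeting a single mistake-ball, where each mistake coordinate and each near-boundary coordinate frees one symbol and the total $g(n,\epsilon)+\delta n$ freed symbols cost only $e^{n(c(\delta)+o(1))}$. The implementations differ in the boundary device: the paper builds a partition $\xi=\{C_1,\ldots,C_m\}$ together with an open cover $\mathcal U=\{U_1,\ldots,U_k\}$ satisfying $\overline{U_i}\subset C_i$ and takes $2\epsilon$ to be a Lebesgue number of $\mathcal U$, so that the cover index $i_j$ with $\overline{B}(f^j y,\epsilon)\subset U_{i_j}$ is determined by the ball's \emph{centre} $y$; the bad positions (those with $i_j>m$) are then fixed once $y$ is, giving the per-ball bound $m^{2\delta n}\cdot m^{g(n,\epsilon)}$ with no positional factor. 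In your version the unforced positions depend on the covered point $x$, which is why you need the extra $\binom{n}{\lceil g(n,\epsilon)+\delta n\rceil}$; since its exponential rate is $H(\delta)+o(1)$, this costs nothing, and your reduction of all four quantities (with either $\liminf$ or $\limsup$) to one upper and one lower bound, with $g$-independence coming for free, is clean and correct.

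There is, however, one genuine flaw as written: the forcing step. With $\partial_\rho\beta:=\{x: d(x,\partial\beta)<\rho\}$, the implication ``$f^j x\notin\partial_\rho\beta$ and $d(f^j y,f^j x)\le\epsilon<\rho$ imply $\beta(f^j y)=\beta(f^j x)$'' is false in a general compact metric space, because the topological boundary of an atom can be much smaller than the set of points metrically close to a \emph{different} atom. Concretely, take $X=\{0\}\cup\{1/n: n\ge 1\}$ with atoms $C_1=\{0\}\cup\{1/(2n): n\ge 1\}$ and $C_2=\{1/(2n+1): n\ge 1\}$: then $\partial C_1=\partial C_2=\{0\}$, so $\partial\beta=\{0\}$, yet a point $x=1/(2n)$ with $d(x,0)\ge\rho$ has the point $1/(2n+1)$ of the other atom at distance $1/(2n(2n+1))$, which is smaller than admissible choices of $\epsilon<\rho$; so the symbol is not forced at such coordinates even though they are counted as good. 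The standard repair is to define $\partial_\rho\beta:=\{x:\exists z \mbox{ with } d(z,x)<\rho \mbox{ and } \beta(z)\neq\beta(x)\}$, equivalently $\{x: B(x,\rho)\not\subseteq\beta(x)\}$. This set is Borel (it is the finite union over $i$ of $C_i\cap\{x: d(x, X\setminus C_i)<\rho\}$), it decreases as $\rho\downarrow 0$ to a subset of $\partial\beta$ (membership for every $\rho$ forces $x\notin\operatorname{int}\beta(x)$), so $\mu(\partial\beta)=0$ and continuity from above still give $\mu(\partial_\rho\beta)<\delta$ for small $\rho$, Birkhoff applies as before, and with this definition the forcing at coordinates $j\in\Lambda$ with $f^j x\notin\partial_\rho\beta$ is immediate. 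With that one substitution your argument goes through verbatim; note that the paper's Lebesgue-number construction is engineered precisely to sidestep this pitfall.
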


\bp
Since $r_n(Z, \epsilon) \geq r_n(g; Z, \epsilon)$, it follows from the original Katok entropy formula that the expression on the right hand side is less than or equal to $h_\mu$
(this is the easier inequality to prove directly any way). To prove the opposite inequality, we give a method inspired by the proof of theorem A2.1 of \cite{Pe}.

For any $\eta > 0$, there exists $\delta$, $0 < \delta \leq \eta$, a finite Borel partition $\xi = \{ C_1, \ldots, C_m\}$ and a finite open cover $\CU = \{U_1, \ldots, U_k\}$ of $X$ where $k \geq m$ with the following properties:

(1) $\mbox{Diam} (U_i) \leq \eta$, $\mbox {Diam}(C_j) \leq \eta$ for all $ i = 1, \ldots, m$, $j = 1, \ldots, k$,

(2) $ \overline U_i \subset C_i$ for all $i = 1, \ldots, m$,

(3) $\mu (C_i \setminus U_i) \leq \delta$ for all $i = 1, \ldots, m$ and $\mu ( \bigcup_{i = m +1}^{k} U_i) \leq \delta$,

(4) $2\delta \log m \leq \eta$.

This is a consequence of the regularity of the measure $\mu$. 
Fix $\eta$ so $ 1-\gamma > \eta > 0$ and take the corresponding number $\delta$, covering $\CU$ and partition $\xi$. Fix $Z \subset X$ with $\mu(Z) > 1-\gamma$. 
Let $t_n (x)$ denote the number of $l$, $0 \leq l \leq n - 1$ for which $f^l (x) \in \bigcup_{i = m + 1}^{k}U_i$. Let $\xi_n = \bigvee_{i = 0} ^{n-1} f^{-i} \xi$ and $C_{\xi_n} (x)$ denote the member of the partition $\xi_n$ to which $x$ belongs.
\begin{lemma}
There exists a set $A \subset Z$ and $N >0$ with $\mu (A) \geq \mu (Z) - \delta$ such that for every $x \in A$ and $n \geq N$

(1) $t_n (x) \leq 2 \delta n$

(2) $\mu (C_{\xi_n}(x)) \leq \exp(-(h_\mu (f, \xi) - \delta) n)$

\end{lemma}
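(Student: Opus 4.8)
The plan is to derive both conclusions as pointwise almost-everywhere statements coming from two standard ergodic theorems, and then to upgrade this pointwise convergence to uniform convergence off a set of small measure via Egorov's theorem; it is this last step that simultaneously produces the single threshold $N$ and the set $A$ of controlled measure.

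First, for conclusion (1), I would write $t_n(x) = \sum_{l=0}^{n-1} \chi_V(f^l x)$, where $V := \bigcup_{i=m+1}^{k} U_i$, and apply Birkhoff's ergodic theorem to the bounded (hence integrable) function $\chi_V$. Since $\mu$ is ergodic, $\tfrac{1}{n} t_n(x) \to \mu(V)$ for $\mu$-a.e.\ $x$, and property (3) of the cover $\CU$ gives $\mu(V) \leq \delta$. For conclusion (2), I would apply the Shannon--McMillan--Breiman theorem to the finite partition $\xi$ (whose entropy is finite, being bounded by $\log m$). This yields $-\tfrac{1}{n}\log \mu(C_{\xi_n}(x)) \to h_\mu(f, \xi)$ for $\mu$-a.e.\ $x$.

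Both of the sequences of functions $x \mapsto \tfrac{1}{n}t_n(x)$ and $x \mapsto -\tfrac{1}{n}\log \mu(C_{\xi_n}(x))$ therefore converge $\mu$-a.e.\ to constants, so Egorov's theorem provides a set $G$ with $\mu(X \setminus G) < \delta$ on which both convergences are uniform. Setting $A := Z \cap G$ gives $\mu(A) \geq \mu(Z) - \mu(X \setminus G) > \mu(Z) - \delta$, and the uniformity furnishes a single $N$ such that for all $x \in A$ and all $n \geq N$ one has $\tfrac{1}{n}t_n(x) < \mu(V) + \delta \leq 2\delta$ and $-\tfrac{1}{n}\log \mu(C_{\xi_n}(x)) > h_\mu(f,\xi) - \delta$. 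Rearranging, these are exactly the asserted bounds (1) and (2).

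The main subtlety is not in the ergodic theorems themselves, which apply directly, but in recognising that the uniform threshold $N$ cannot be read off from them (they supply only an $x$-dependent rate of convergence) and must be extracted by Egorov; the measure bookkeeping then just requires choosing the Egorov exceptional set to have measure strictly less than $\delta$, which is exactly what gives $\mu(A) \geq \mu(Z) - \delta$.
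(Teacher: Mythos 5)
Your proof is correct and follows essentially the same route as the paper's: Birkhoff's ergodic theorem for $t_n$, the Shannon--McMillan--Breiman theorem for $\mu(C_{\xi_n}(x))$, and Egorov's theorem to upgrade to uniform convergence, yielding the single threshold $N$. The only cosmetic difference is that the paper extracts two Egorov sets $A_1, A_2$ each with exceptional measure $\delta/2$ and intersects them, whereas you use one combined set $G$; your choice $A = Z \cap G$ in fact handles the requirement $A \subset Z$ slightly more carefully than the paper's own write-up.
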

\begin{proof}
Let $\chi$ be the characteristic function of $\bigcup_{i = m + 1}^{k}U_i$. We can write $t_n (x) = \sum_{i=0}^{n-1} \chi(f^i x)$. 
By Birkhoff's ergodic theorem and Egorov's theorem, we can find a set $A_1 \subset X$ with $\mu(A_1) \geq \mu(Z) - \frac{\delta}{2}$ such that for $x \in A_1$, we have uniform convergence 
\[
n^{-1} t_n(x) = \frac{1}{n} \sum_{i=0}^{n-1} \chi (f^{i}x) \rightarrow \int \chi d\mu = \mu(\bigcup_{i = m + 1}^{k}U_i) \leq \delta.
\]
Choose $N_1$ such that if $n \geq N_1$ and $x \in A_1$, then $t_n (x) \leq 2 \delta n$.

By Shannon-McMillan-Breiman theorem and Egorov's theorem, we can find a set $A_2 \subset X$ with $\mu(A_2) \geq \mu(Z) - \frac{\delta}{2}$ such that for $x \in A_1$, we have uniform convergence 
\[
-\frac{1}{n} \log \mu (C_{\xi_n} (x)) \rightarrow h_\mu(f, \xi)
\]
There exists $N_2$ such that if $n \geq N_2$ and $x \in A_2$, then $-\frac{1}{n} \log \mu (C_{\xi_n} (x)) \leq h(f, \xi) + \delta$. Set $A = A_1 \cap A_2$ and $N = \max\{N_1, N_2\}$ and the lemma is proved.  
\end{proof}
Let $\xi_n^{\ast}$ be the collection of elements $C_{\xi_n}$ of the partition $\xi_n$ for which $C_{\xi_n} \cap A \neq \emptyset$. 
Then for $n \geq N$, using property (2) of $A$,
\beq
\# \xi_n^{\ast} &\geq& \sum_{C \in \xi_n^{\ast}} \mu (C) \exp \{n (h_\mu (f, \xi) - \delta)\}\\
&\geq& \mu (A) \exp \{n (h_\mu (f, \xi) - \delta)\}\}.
\eeq
Let $2\epsilon$ be a Lebesgue number for $\CU$ and let $S$ be $(g; n, \epsilon)$ spanning for $Z$. 
We have $Z \subseteq \bigcup_{x \in S} \bigcup_{\Lambda \in I(g;n, \epsilon)} \overline{B}_{\Lambda} (x, \epsilon)$.
Let us fix $B = \overline{B}_{\Lambda} (x, \epsilon)$ for some $x \in S$ and $\Lambda \in I(g;n, \epsilon)$. Let $\xi_\Lambda$ be the partition $\bigvee_{i \in \Lambda} f^{-i} \xi$. We estimate the number $p(B,\xi_{\Lambda})$ of elements of the partition $\xi_{\Lambda}$ which have non-empty intersection with $A \cap B$.

Since $2 \epsilon$ is a Lebesgue number for $\CU$, then $\overline{B} (f^j x, \epsilon) \subset U_{i_j}$ for some $U_{i_j} \in \CU$. If $i_j \in \{1, \ldots, m\}$ then $f^{-j} (U_{i_j}) \subset f^{-j} (C_{i_j})$. If $i_j \in \{m + 1, \ldots, k\}$, then anything up to $m$ sets of the form $f^{-j} (C_{i_j})$ may have non-empty intersection with $f^{-j} (U_{i_j})$. 
It follows, using property (1) of $A$, that
\[
p(B,\xi_{\Lambda}) \leq m^{2\delta n } = \exp(2\delta n \log m)
\]
The number $p(B,\xi_{n})$ of elements of the partition $\xi_{n}$ which have non-empty intersection with both $A$ and $B$ satisfies
\[
p(B,\xi_{n}) \leq p(B,\xi_{\Lambda}) m^{g(n, \epsilon)} \leq \exp \{(2\delta n + g(n, \epsilon)) \log m\}.
\]
It follows that\[
\#\xi_n^{\ast} \leq \sum_{x \in S} \sum_{\Lambda \in I(g;n, \epsilon)} p(\overline{B}_{\Lambda} (x, \epsilon),\xi_n) \leq  G_{n, \epsilon}\#S  \exp \{(2\delta n + g(n, \epsilon)) \log m\},
\]
where  $G_{n, \epsilon}  = \sum_{i \leq g(n, \epsilon)} {n\choose i}$. Rearranging, we have
\[
\frac{1}{n}\log \#S \geq h_\mu (f, \xi) - \delta - \left(2\delta + \frac{g(n, \epsilon)}{n} \right) \log m - \frac{1}{n} \log G_{n, \epsilon}  .
\]
A short combinatorial argument (see \cite[Lemma 2.1]{PfS2}) shows that $\frac{1}{n} \log G_{n, \epsilon}  \ra 0$. Since $2\delta \log m < \eta$, $\mbox{Diam}(\xi) < \eta$, $\delta < \eta$, $\frac{g(n, \epsilon)}{n} \ra 0$, and $\eta$ was arbitrary, we are done.
\ep
As a corollary, we have a version of theorem \ref{theorem4} for topological entropy (which we do not use).
\begin{theorem}
Let $(X,d)$ be a compact metric space and $f:X \mapsto X$ be a continuous map. We have  
\[
\htop (f) = \lim_{\epsilon \ra 0} \liminf_{n \ra \infty} \frac{1}{n} \log  r_n(g; X, \epsilon) 
\]
The formula remains true if we replace the $\liminf$ by $\limsup$ and/or $r_n(g; X, \epsilon)$ by $s_n(g; X, \epsilon)$. The value taken by the $\liminf$ (or $\limsup$) is independent of the choice of mistake function $g$.
\end{theorem}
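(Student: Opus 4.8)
The plan is to deduce this topological version as a corollary of Theorem \ref{theorem4} together with the variational principle, by squeezing all four candidate double limits between two quantities we can control directly.

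First I would dispose of the upper bound, which comes from monotonicity alone. Part (1) of the lemma preceding Theorem \ref{theorem4}, applied with $Z = X$, gives $r_n(g;X,\epsilon) \leq s_n(g;X,\epsilon) \leq s_n(X,\epsilon)$ for every $n$ and every $\epsilon$. Since the classical Dinaburg--Bowen formula reads $\htop(f) = \lim_{\epsilon \ra 0} \limsup_{n \ra \infty} \frac1n \log s_n(X,\epsilon)$, it follows that
\[
\lim_{\epsilon \ra 0} \limsup_{n \ra \infty} \tfrac1n \log s_n(g;X,\epsilon) \leq \htop(f),
\]
and hence that each of the four double limits (formed with $r_n$ or $s_n$, and with $\liminf$ or $\limsup$) is at most $\htop(f)$. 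Each such double limit exists because decreasing $\epsilon$ only enlarges the spanning and separated numbers, so the inner quantities are monotone in $\epsilon$ and the outer $\lim_{\epsilon \ra 0}$ is a genuine (monotone) limit.

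The substance is the matching lower bound, and this is where Theorem \ref{theorem4} enters. The key observation is that $X$ is itself an admissible set in the infimum appearing in that theorem: for any $\gamma \in (0,1)$ and any $\mu \in \mathcal{M}^e_{f}(X)$ we have $\mu(X) = 1 \geq 1 - \gamma$, so
\[
\inf \{ r_n(g;Z,\epsilon) : Z \subset X,\ \mu(Z) \geq 1 - \gamma \} \leq r_n(g;X,\epsilon).
\]
Applying $\tfrac1n \log(\cdot)$, then $\liminf_{n \ra \infty}$, then $\lim_{\epsilon \ra 0}$, and invoking Theorem \ref{theorem4} on the left-hand side, yields $h_\mu \leq \lim_{\epsilon \ra 0} \liminf_{n \ra \infty} \tfrac1n \log r_n(g;X,\epsilon)$ for every ergodic $\mu$. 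Taking the supremum over $\mu \in \mathcal{M}^e_{f}(X)$ and using the variational principle in the form $\htop(f) = \sup_{\mu \in \mathcal{M}^e_{f}(X)} h_\mu$ (valid since the entropy map is affine and one may pass to the ergodic decomposition) gives $\htop(f) \leq \lim_{\epsilon \ra 0} \liminf_{n \ra \infty} \tfrac1n \log r_n(g;X,\epsilon)$.

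Finally I would assemble the squeeze. Since $r_n(g;X,\epsilon) \leq s_n(g;X,\epsilon)$ pointwise and $\liminf \leq \limsup$, the quantity $\lim_{\epsilon \ra 0}\liminf_{n \ra \infty} \tfrac1n \log r_n(g;X,\epsilon)$ is the smallest of the four double limits and $\lim_{\epsilon \ra 0}\limsup_{n \ra \infty} \tfrac1n \log s_n(g;X,\epsilon)$ is the largest. The lower bound shows the former is $\geq \htop(f)$ and the upper bound shows the latter is $\leq \htop(f)$, so all four coincide with $\htop(f)$. Because both bounds hold verbatim for an arbitrary mistake function $g$, the common value is independent of $g$. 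I do not anticipate a genuine obstacle: the only steps needing care are the bookkeeping of the four variants and the observation that $Z = X$ is permitted in the infimum, which is precisely what converts the measure-theoretic formula of Theorem \ref{theorem4} into a statement about $r_n(g;X,\epsilon)$.
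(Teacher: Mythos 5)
Your proof is correct and follows exactly the route the paper intends: the paper states this result as an unproved corollary of Theorem \ref{theorem4}, and your deduction --- taking $Z=X$ as an admissible set in the infimum, applying the variational principle over ergodic measures for the lower bound, and using $r_n(g;X,\epsilon) \leq s_n(g;X,\epsilon) \leq s_n(X,\epsilon)$ together with the classical Bowen--Dinaburg formula for the upper bound --- is precisely that corollary spelled out. One small caveat: your claim that the inner quantities are monotone in $\epsilon$ is unjustified, since $g(n,\epsilon)$ may grow as $\epsilon$ decreases (e.g.\ $g(n,\epsilon)=\epsilon^{-1}\log n$), but this is harmless because your final squeeze shows that the $\liminf$ and $\limsup$ as $\epsilon \ra 0$ of each of the four quantities both equal $\htop(f)$, which gives existence of the limits without any monotonicity.
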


\subsection{Topological entropy for non-compact sets}
We consider the Bowen definition of topological entropy for non-compact sets. Following Pesin and Pitskel \cite{PP2}, we give a definition which is suitable for maps which have discontinuities. Suppose $X$ is a compact metric space, $Y$ is a (generally non-compact) subset of $X$ and $f:Y \mapsto Y$ is continuous. When $f:X \mapsto X$ is continuous (which is the setting of our main theorem), we set $Y=X$. In our final section, when we consider the $\beta$-transformation $f_\beta$, we set 
\[
Y= X \setminus \{ \beta^{-i}: i \in \IN\} = X \setminus \bigcup_i f_\beta^{-i}(0).
\]
Let $Z \subset Y$ be an arbitrary Borel set, not necessarily compact or invariant. 
We consider finite and countable collections of the form $\Gamma = \{ B_{n_i}(x_i, \epsilon) \}_i$. 
For $\alpha \in \IR$, we define the following quantities:
\[
Q(Z,\alpha, \Gamma) = \sum_{B_{n_i}(x_i, \epsilon) \in \Gamma} \exp -\alpha n_i,
\]
\[
M(Z, \alpha, \epsilon, N) = \inf_{\Gamma} Q(Z,\alpha, \Gamma),
\]
where the infimum is taken over all finite or countable collections of the form $\Gamma = \{ B_{n_i}(x_i, \epsilon) \}_i$ with $x_i \in X$ such that $\Gamma$ covers Z and $n_i \geq N$ for all $i = 1, 2, \ldots$. Define
\[
m(Z, \alpha, \epsilon) = \lim_{N \rightarrow \infty} M(Z, \alpha,\epsilon, N).
\]
The existence of the limit is guaranteed since the function $M(Z, \alpha,\epsilon, N)$ does not decrease with N. By standard techniques, we can show the existence of
\begin{displaymath}
\htop (Z, \epsilon) := \inf \{ \alpha : m(Z, \alpha, \epsilon) = 0\} = \sup \{ \alpha :m(Z, \alpha, \epsilon) = \infty \}.
\end{displaymath}
\begin{definition}
The topological entropy of $Z$ is given by
\[
\htop (Z) = \lim_{\epsilon \ra 0} \htop (Z, \epsilon).
\]
\end{definition}
See \cite{Pe} for verification that the quantities $\htop (Z, \epsilon)$ and $\htop (Z)$ are well defined. When $X=Y$, we denote the topological entropy of the dynamical system $(X, f)$ by $\htop (f)$ and we note that $\htop(X)=\htop (f)$. We sometimes write $\htop(Z, f)$ in place of $\htop(Z)$ when we wish to emphasise the dependence on $f$.
\subsection{Topological entropy for shift spaces} \label{tes}
Let $\Sigma$ be a one-sided shift space (i.e. a closed, $\sigma$-invariant subset of a full one-sided shift on finitely many symbols). For shift spaces, the definition of topological entropy can be simplified and we introduce notation that reflects this. For $x=(x_i)_{i=1}^\infty$, let $C_n (x) = \{ y \in \Sigma : x_i = y_i \mbox{ for } i=1, \ldots, n\}$.  Let $Z \subset \Sigma$ be an arbitrary Borel set, not necessarily compact or invariant. We consider finite and countable collections of the form $\Gamma = \{ C_{n_i}(x_i) \}_i$. 
For $s \in \IR$, we define the following quantities:
\[
Q(Z,s, \Gamma) = \sum_{C_{n_i}(x_i) \in \Gamma} \exp -s n_i,
\]
\[
M(Z, s, N) = \inf_{\Gamma} Q(Z,s, \Gamma),
\]
where the infimum is taken over all finite or countable collections of the form $\Gamma = \{ C_{n_i}(x_i) \}_i$ with $x_i \in \Sigma$ such that $\Gamma$ covers $Z$ and $n_i \geq N$ for all $i = 1, 2, \ldots$. Define
\[
m(Z, s) = \lim_{N \rightarrow \infty} M(Z, s, N).
\]
The existence of the limit is guaranteed since the function $M(Z, s, N)$ does not decrease with $N$. 
\bl
The topological entropy of $Z \subset \Sigma$ is given by
\[
\htop (Z) := \inf \{ s : m(Z, s) = 0\} = \sup \{ s :m(Z, s) = \infty \}.
\]
\el
The proof, which we omit, follows from the fact that every open ball $B(x, \epsilon)$ in $\Sigma$ is a set of the form $C_n (x)$.

\section{Main result} \label{beta.3}
\begin{theorem} \label {theorem.a.0.1}
Let $(X,d)$ be a compact metric space and $f:X \mapsto X$ be a continuous map with the almost specification property. Assume that $\varphi \in C(X)$ satisfies $\inf_{\mu \in \mathcal{M}_{f} (X)} \int \varphi d \mu < \sup_{\mu \in \mathcal{M}_{f} (X)} \int \varphi d \mu$. Let $\widehat X(\varphi, f)$ be the irregular set for $\varphi$ defined as in (\ref{ir}), then $h_{top} (\widehat X(\varphi, f)) =  h_{top}(f)$.
\end{theorem}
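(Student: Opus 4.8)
The plan is to establish the non-trivial inequality $\htop(\widehat X(\varphi,f)) \geq \htop(f)$, the reverse being immediate from $\widehat X(\varphi,f) \subseteq X$. I fix a small parameter $\delta > 0$ and build a compact set $F \subseteq \widehat X(\varphi,f)$ with $\htop(F) \geq \htop(f) - K\delta$ for an absolute constant $K$; letting $\delta \to 0$ then finishes the proof. The construction is a Moran-type scheme in which long orbit segments that track two different ergodic measures are spliced together using the almost specification property, with the block lengths chosen to force the Birkhoff averages to oscillate.

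First I select the two measures. By the hypothesis and Lemma \ref{equiv}, $\varphi$ is genuinely non-constant on $\mathcal M_f(X)$. The entropy spectrum $\alpha \mapsto \sup\{h_\mu : \int \varphi\, d\mu = \alpha\}$ is concave and attains the value $\htop(f)$ on the non-degenerate interval $[\inf_\mu \int\varphi\,d\mu,\ \sup_\mu \int\varphi\,d\mu]$; concavity (and continuity on the interior, with the right one-sided limits at the endpoints) forces it to exceed $\htop(f)-\delta$ on a non-degenerate subinterval. A standard ergodic-decomposition argument then supplies ergodic measures $\mu_1, \mu_2$ with $\alpha_1 := \int\varphi\,d\mu_1 \neq \int\varphi\,d\mu_2 =: \alpha_2$ and $h_{\mu_i} > \htop(f) - \delta$ for $i=1,2$.

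Next I produce the building blocks. Applying the Modified Katok entropy formula (Theorem \ref{theorem4}) in its separated-set form to $\mu_i$ over a set of measure close to one on which Birkhoff's theorem and Egorov's theorem force $\frac1n S_n\varphi$ to converge uniformly to $\alpha_i$, I obtain, for all large $n$ and a fixed small $\epsilon$, a $(g;n,\epsilon)$-separated set $\Gamma_n^{(i)} \subseteq X$ with $\#\Gamma_n^{(i)} \geq \exp(n(h_{\mu_i} - \delta))$ and $|\frac1n S_n\varphi(x) - \alpha_i| < \gamma$ for every $x \in \Gamma_n^{(i)}$. Using strong $(g;n,\epsilon)$-separation here, rather than ordinary separation, is essential, since it is this notion that survives the lossy gluing. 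I then concatenate: choosing blocks alternately from the $\Gamma^{(1)}$ and $\Gamma^{(2)}$ families, with prescribed lengths $n_1, n_2, \dots$ growing so rapidly that each block dominates the sum of all preceding ones, the almost specification property yields a point whose orbit $\epsilon$-shadows every block off a set of at most $g(n_j,\epsilon)$ coordinates. Since $g(n,\epsilon)/n \to 0$ and $\varphi$ is bounded and uniformly continuous, the averages read at consecutive block-ends stay within $|\alpha_1-\alpha_2|/4$ of $\alpha_1$ and of $\alpha_2$ respectively (the errors coming from $\gamma$, from $\norm\, g(n,\epsilon)/n$, and from the modulus of continuity of $\varphi$ at scale $\epsilon$), so $\frac1n S_n\varphi$ oscillates and the Moran set $F$ of all such glued points lies inside $\widehat X(\varphi,f)$.

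Finally I bound the entropy from below. I equip $F$ with a Moran measure $\mathfrak m$ distributing mass uniformly over the branch choices at each level and invoke the mass distribution principle for the Pesin--Pitskel entropy: it suffices to show $\mathfrak m(B_n(x,\epsilon)) \leq \exp(-n(\min\{h_{\mu_1},h_{\mu_2}\} - K\delta))$ for all $x \in F$ and large $n$, giving $\htop(F) \geq \min\{h_{\mu_1},h_{\mu_2}\} - K\delta \geq \htop(f) - (K+1)\delta$. I expect the bookkeeping of the mistake sets to be the main obstacle. In the classical specification setting the glued orbit agrees with each template up to a fixed gap, so distinct branch-choices trivially give separated points and the counting is clean; under almost specification each block is shadowed only off an unknown set $\Lambda$ of up to $g(n_j,\epsilon)$ coordinates, and one must verify both that distinct sequences of blocks still generate points that are far apart at the dynamical-ball scale --- this is exactly what part (2) of the separation lemma, comparing $s_n(2g;Z,2\epsilon)$ with $r_n(g;Z,\epsilon)$ via intersecting two good coordinate sets into one missing at most $2g(n,\epsilon)$ indices, is designed to control --- and that the resulting overcounting factors of the form $m^{g(n,\epsilon)}$ contribute only a vanishing exponential rate. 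Tracking these mistake sets coherently through the entire infinite concatenation, while maintaining the upper bound on $\mathfrak m(B_n(x,\epsilon))$, is the technical heart of the argument.
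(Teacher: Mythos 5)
Your overall architecture --- two high-entropy ergodic measures with distinct $\varphi$-integrals, Katok-type $(g;n,\epsilon)$-separated sets on Egorov sets where Birkhoff averages converge uniformly, a Moran set glued by almost specification, the $\Lambda_1\cap\Lambda_2\in I(2g;n,\epsilon)$ intersection trick to keep distinct branches separated, and a mass-distribution/Entropy Distribution Principle finish --- is exactly the paper's, but two of your steps have genuine gaps. First, the selection of the measures: you claim concavity of the entropy spectrum plus ``a standard ergodic-decomposition argument'' supplies \emph{ergodic} $\mu_1,\mu_2$ with distinct integrals and $h_{\mu_i}>\htop(f)-\delta$. Ergodic decomposition cannot do this: decomposing a high-entropy invariant measure $\nu$ with $\int\varphi\,d\nu=\alpha$ gives no ergodic component that simultaneously has near-maximal entropy and integral near $\alpha$ --- the entropy can be carried entirely by one component whose integral is far from $\alpha$, with low-entropy components adjusting the average. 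Concretely, for a system consisting of a uniquely ergodic positive-entropy subshift together with a fixed point, hypothesis (b) holds and the spectrum exceeds $\htop(f)-\delta$ on a nondegenerate interval (via convex combinations), yet there is only \emph{one} ergodic measure of near-maximal entropy, so no admissible pair exists. That example lacks almost specification, which is precisely the point: this step needs input tied to almost specification, and it is what the paper flags as ``a slightly subtle point'' --- it fixes an ergodic $\mu_1$, forms $\nu'=t\mu_1+(1-t)\nu$, and invokes the Pfister--Sullivan entropy-density theorem \cite{PfS2} (under almost specification, every invariant measure is a weak-$\ast$ limit of ergodic measures with converging entropies) to extract $\mu_2$; alternatively one can run the self-contained ``modified construction'' of \cite{Tho4}. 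Your soft argument must be replaced by one of these.

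Second, your block structure breaks the entropy estimate. You take single blocks of lengths $n_1,n_2,\ldots$ from alternating separated families, ``each block dominating the sum of all preceding ones.'' Domination is indeed what makes the averages oscillate, but a $(g;n_j,\epsilon)$-separated set separates points only over the full window of length $n_j$ (the separating coordinates may sit at the very end of the block), so for a time $n$ falling strictly inside the $k$-th block the only available bound is $\mathfrak m(B_n(x,\epsilon))\leq \exp(-(h-\delta)\sum_{j<k}n_j)$, while $n$ itself may be of size $n_k\gg\sum_{j<k}n_j$; the exponential rate per unit time then degenerates to $0$ at such $n$, and the Entropy Distribution Principle (which requires the bound $K e^{-ns}$ for \emph{all} large $n$) yields nothing. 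This is why the paper's construction takes, at stage $k$, $N_k$ independent copies of length-$n_k$ points from $\mc S_k$, subject to the two conditions (\ref{f.1}): $n_{k+1}/N_k\to 0$ makes the unresolved partial sub-block negligible relative to $n$ at every time, giving the uniform estimate of lemma \ref{lemma7} and hence $\limsup_l \mu_l(B_n(q,\epsilon))\leq \exp\{-n(\htop(f)-4\gamma)\}$, while $(N_1n_1+\cdots+N_kn_k)/N_{k+1}\to 0$ keeps each new stage dominant so the stage-end averages still oscillate (using $n_kN_k/t_k\to 1$). A related smaller repair: the paper shadows the stage-$k$ blocks at the shrinking scale $\epsilon/2^k$ against $4\epsilon$-separated sets (with mistake functions $h_k(n,\epsilon)=2g(n,\epsilon/2^k)$), which is what makes the separation survive the gluing with room to spare in lemmas \ref{lb2} and \ref{lemma7}; with your fixed shadowing scale $\epsilon$ the constants in the triangle inequalities must be rechecked. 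With these two repairs your outline coincides with the paper's proof of theorem \ref{theorem.a.0.1}.
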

We remark that $\widehat X(\varphi, f) \neq \emptyset$ is a sufficient condition on $\varphi$ for the theorem to apply.
\bp 
Let us fix a small $\gamma > 0$, and take ergodic measures $\mu_1$ and $\mu_2$ such that

(1) $h_{\mu_i} > \htop (f) - \gamma$ for $i = 1, 2$, 

(2) $\int \varphi d \mu_1 \neq \int \varphi d\mu_2$. 

That we are able to choose $\mu_i$ to be ergodic is a slightly subtle point. Let $\mu_1$ be ergodic and satisfy $h_{\mu_1} > \htop(f) - \gamma /3$. Let $\nu \in \mc M_f (X)$ satisfy $\int \varphi d {\mu_1} \neq \int \varphi d \nu$. Let $\nu^\prime = t \mu_1 + (1-t) \nu$ where $t \in (0,1)$ is chosen sufficiently close to $1$ so that $h_{\nu^\prime} > \htop(f) - 2 \gamma /3$. By \cite{PfS2}, when $f$ has the almost specification property, we can find a sequence of ergodic measures $\nu_n \in \mc M_f (X)$ such that $h_{\nu_n} \ra h_{\nu^\prime}$ and $\nu_n \ra \nu^\prime$ in the weak-$\ast$ topology. Therefore, we can choose a measure belonging to this sequence which we call $\mu_2$ which satisfies $h_{\mu_2} > \htop(f) - \gamma$ and $\int \varphi d \mu_1 \neq \int \varphi d\mu_2.$ (We could avoid the use of the result from \cite{PfS2} by giving a self-contained proof along the lines of the `modified construction' in \cite{Tho4}. We do not do so in the interest of brevity.)
Choose $\delta > 0$ sufficiently small so
\[
\left | \int \varphi d \mu_1 - \int \varphi d\mu_2 \right | > 4 \delta.
\]
Let $\rho : \IN \mapsto \{1, 2\}$ be given by $\rho(k) = (k+1) (\mbox{mod} 2) +1$. Choose a strictly decreasing sequence $\delta_k \ra 0$ with $\delta_1 < \delta$ and a strictly increasing sequence $l_k \ra \infty$ so the set
\be \label{5}
Y_k := \left \{ x \in X^\prime : \left | \frac{1}{n} S_n \varphi (x) - \int \varphi d\mu_{\rho(k)} \right | < \delta_k \mbox{ for all } n \geq l_k\right \}
\ee
satisfies $\mu_{\rho(k)} (Y_k) > 1 - \gamma$ for every $k$. 

The following lemma follows readily from Theorem \ref{theorem4}. 
\begin{lemma} \label {lemma1}
Define mistake functions $h_k (n, \epsilon) := 2g(n, \epsilon /2^k)$. For any sufficiently small $\epsilon > 0$, we can find a sequence $n_k \ra \infty$ and a countable collection of finite sets $\mc S_k$ so that each $\mc S_k$ is a $(h_k; n_k, 4 \epsilon)$ separated set for $Y_k$ and satisfies
\[
\# S_k \geq \exp( n_k (\htop (f) - 4 \gamma)).
\]
Furthermore, the sequence $n_k$ can be chosen so that $n_k \geq l_k$, 
$n_k > N(h_k, \epsilon)$ and $h_k(n_k, \epsilon)/n_k \ra 0$.
\end{lemma}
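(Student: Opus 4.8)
The plan is to apply the separated-set, $\liminf$ version of the modified Katok entropy formula (Theorem~\ref{theorem4}) once for each $k$, with the measure $\mu_{\rho(k)}$ and the mistake function $h_k$. For fixed $k$ this yields
\[
h_{\mu_{\rho(k)}} = \lim_{\epsilon' \to 0} \liminf_{n \to \infty} \frac{1}{n} \log \left( \inf \{ s_n(h_k; Z, \epsilon') : Z \subset X, \ \mu_{\rho(k)}(Z) \geq 1 - \gamma \} \right).
\]
Since $h_{\mu_{\rho(k)}} > \htop(f) - \gamma$, and the inner $\liminf$ converges (increasingly) to this value as $\epsilon' \to 0$, once $\epsilon$ is small enough the $\liminf$ evaluated at resolution $4\epsilon$ will exceed $\htop(f) - 2\gamma$.

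The point that needs care is that this threshold on $\epsilon$ must be chosen uniformly in $k$, even though the mistake functions $h_k = 2g(\cdot, \epsilon/2^k)$ genuinely change with $k$. This is precisely where the final clause of Theorem~\ref{theorem4} --- that the value of the $\liminf$ does not depend on the chosen mistake function --- is essential: after discarding the mistake-function dependence, the quantity depends on $k$ only through $\rho(k) \in \{1,2\}$, so there are only two limits to control, namely $h_{\mu_1}$ and $h_{\mu_2}$, both exceeding $\htop(f) - \gamma$. A single sufficiently small $\epsilon$ therefore gives, for every $k$,
\[
\liminf_{n \to \infty} \frac{1}{n} \log \left( \inf \{ s_n(h_k; Z, 4\epsilon) : \mu_{\rho(k)}(Z) \geq 1 - \gamma \} \right) > \htop(f) - 2\gamma.
\]

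Unwinding this $\liminf$, for each $k$ there is $M_k$ with $\inf\{ s_n(h_k; Z, 4\epsilon) : \mu_{\rho(k)}(Z) \geq 1 - \gamma\} > \exp(n(\htop(f) - 2\gamma))$ for all $n \geq M_k$. As $\mu_{\rho(k)}(Y_k) > 1 - \gamma$, the set $Y_k$ is admissible in the infimum, so $s_n(h_k; Y_k, 4\epsilon) > \exp(n(\htop(f) - 4\gamma))$ for $n \geq M_k$. It remains to meet the listed constraints on $n_k$ simultaneously, which I would do inductively: given $n_{k-1}$, choose $n_k$ exceeding each of $n_{k-1}$, $l_k$, $N(h_k, \epsilon)$ and $M_k$, and large enough that $h_k(n_k, \epsilon)/n_k < 1/k$. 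The last is possible because $h_k(n,\epsilon)/n \to 0$ as $n \to \infty$ for each fixed $k$, and it forces $h_k(n_k, \epsilon)/n_k \to 0$ as $k \to \infty$. Taking $\mc S_k$ to be a maximal $(h_k; n_k, 4\epsilon)$ separated set for $Y_k$ then completes the argument. I expect the only real obstacle to be the uniform-in-$k$ choice of $\epsilon$; once mistake-function independence collapses it to the two fixed measures $\mu_1, \mu_2$, the rest is bookkeeping.
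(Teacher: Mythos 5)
Your proposal is correct and takes essentially the same approach the paper intends: the paper gives no written argument beyond asserting that the lemma ``follows readily'' from theorem \ref{theorem4}, and your derivation --- applying the separated-set, $\liminf$ version of that theorem to the two measures $\mu_1,\mu_2$, invoking the mistake-function-independence clause to choose $\epsilon$ uniformly in $k$, and then selecting $n_k$ inductively to meet the growth constraints --- is precisely the intended unwinding. No gaps; the bookkeeping (admissibility of $Y_k$ in the infimum since $\mu_{\rho(k)}(Y_k) > 1-\gamma$, and $h_k(n_k,\epsilon)/n_k < 1/k$) is handled correctly.
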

We choose $\epsilon$ sufficiently small 
and fix all the ingredients provided by lemma \ref{lemma1}. Our strategy is to construct a certain fractal $F \subset \widehat X(\varphi, f)$, on which we can define a sequence of measures suitable for an application of the following result of Takens and Verbitskiy \cite{TV}. 
\bt [Entropy Distribution Principle] 
Let $f : X \mapsto X$ be a continuous transformation. Let $Z \subseteq X$ be an arbitrary Borel set. Suppose there exists $\epsilon > 0$ and $s \geq 0$ such that one can find a sequence of Borel probability measures $\mu_{k}$, a constant $K > 0$ and an integer $N$ satisfying 
\[
\limsup_{k \ra \infty} \mu_{k} (B_n (x, \epsilon)) \leq K e^{-ns}
\] 
for every ball $B_n (x, \epsilon)$ such that $B_n (x, \epsilon) \cap Z \neq \emptyset$ and $n \geq N$. Furthermore, assume that at least one limit measure $\nu$ of the sequence $\mu_{k}$ satisfies $\nu (Z) > 0$. Then $\htop (Z, \epsilon) \geq s$.
\et

\subsection{Construction of the fractal F}
Let us choose a sequence with $N_0 = 0$ and $N_k$ increasing to $\infty$ sufficiently quickly so that
\begin{equation} \label{f.1}
\limk \frac{n_{k+1}}{N_k} = 0,  \limk \frac{N_1 n_1 + \ldots + N_k n_k}{N_{k+1}} = 0.
\end{equation}
Let $\ul x_i =(x_1^i, \ldots, x_{N_i}^i) \in S_i^{N_i}$. For any $(\ul x_1, \ldots, \ul x_k) \in S_1^{N_1} \times \ldots \times S_k^{N_k}$, by the almost specification property, we have
\[
B(\ul x_1, \ldots \ul x_k) := \bigcap_{i=1}^k \bigcap_{j=1}^{N_i} f^{-\sum_{l=0}^{i-1} N_l n_l - (j-1) n_i} B_{n_i} (g; x^i_j, \frac{\epsilon}{2^i}) \neq \emptyset.
\]
We define $F_k$ by
\[
F_k = \{\overline{B(\ul x_1, \ldots, \ul x_k)}: (\ul x_1, \ldots \ul x_k) \in S_1^{N_1} \times \ldots \times S_k^{N_k}\}.
\]
Note that $F_k$ is compact and $F_{k+1} \subseteq F_k$. Define $F = \bigcap_{k=1}^{\infty} F_k$. 

\bl \label{lemma6}
For any $p \in F$, the sequence $\frac{1}{t_k} \sum_{i = 0}^{t_k-1} \varphi (f^i (p))$ diverges, where $t_k = \sum_{i=0}^{k} N_i n_i$.
\el
\bp
Choose $p \in F$ and let $p_k := f^{t_{k-1}}p$. Then there exists $(x_1^k, \ldots, x_{N_k}^k) \in S_k^{N_k}$ such that
\[
p_k \in \bigcap_{j=1}^{N_k} f^{-(j-1)n_k} \overline{B_{n_k}}(g; x^k_j, \epsilon/2_k).
\]
For $c >0$, let $\mbox{Var}(\varphi, c) := \sup \{ |\varphi(x) - \varphi(y)| : d(x,y)< c\}$. We have
\[
S_{n_k N_k} \varphi (p_k) \leq \sum_{j=1}^{N_k} S_{n_k} \varphi (x_j^k) + n_k N_k \mbox{Var} (\varphi, \epsilon/2^k) + N_kg(n_k, \epsilon/2^k) \| \varphi\|
\]
and hence
\[
\frac{1}{{n_k N_k}} S_{n_k N_k} \varphi (p_k) \leq \int \varphi d \mu_{\rho(k)} + \delta_k  + \mbox{Var} (\varphi, \epsilon/2^k) + \frac{1}{n_k}g(n_k, \epsilon/2^k). 
\]
It follows that
\[
\left | \frac{1}{{n_k N_k}} S_{n_k N_k} \varphi (p_k) - \int \varphi d \mu_{\rho(k)} \right| \ra 0. 
\]
We can use the fact that $\frac{n_k N_k}{t_k} \ra 1$ to prove that
\[
\left | \frac{1}{{n_k N_k}} S_{n_k N_k} \varphi (p_k) - \frac{1}{t_k} S_{t_k} \varphi (p) \right | \ra 0,
\]
and the result follows.
\ep
\subsection{Construction of a special sequence of measures $\mu_k$} \label{ss}
We must first undertake an intermediate construction. For each $\ul x =(\ul x_1, \ldots, \ul x_k) \in S_1^{N_1} \times \ldots \times S_k^{N_k}$, we choose one point $z = z(\ul x)$ such that
\[
z \in B(\ul x_1, \ldots \ul x_k).
\]
Let $\mc T_k$ be the set of all points constructed in this way. We show that points constructed in this way are distinct and thus $\#\mc T_k = \#S_1^{N_1} \ldots \#S_k^{N_k}$. 
\bl \label{lb2}
Let $\ul x$ and $\ul y$ be distinct members of $S_1^{N_1} \times \ldots \times S_k^{N_k}$. Then $z_1 := z ( \ul x )$ and $z_2 := z( \ul y )$ are distinct points. Thus $\#\mc T_k = \#S_1^{N_1} \ldots \#S_k^{N_k}$. 
\el
\begin{proof} 
Since $\ul x \neq \ul y$, there exists $i,j$ so $x^i_j \neq y^i_j$. We have $\Lambda_1, \Lambda_2 \in I(g; n_i, \epsilon/2^i)$ such that
\[
d_{\Lambda_1} (x^i_j, f^{a} z_1 ) < \frac{\epsilon}{2^i}  \mbox{ and }  d_{\Lambda_2} (y^i_j, f^{a} z_2 ) < \frac{\epsilon}{2^i},
\] 
where $a= \sum_{l=0}^{i-1} N_l n_l + (j-1) n_i$. Let $\Lambda = \Lambda_1 \cap \Lambda_2$. Since $\Lambda \in I(2g; n_i, \epsilon/2^i)$, we have $d_\Lambda (x^i_j , y^i_j) >4 \epsilon$. Using these inequalities, we have $d_\Lambda (f^{a} z_1 , f^{a} z_2) >3 \epsilon$.
\end{proof}

We now define the measures on $F$ which yield the required estimates for the Entropy Distribution Principle. We define, for each $k$, an atomic measure centred on $\Tk$. Precisely, let
\[
\nu_k := \sum_{z \in \Tk}\delta_z  
\]
We normalise $\nu_k$ to obtain a sequence of probability measures $\mu_k$, ie. we let $\mu_k := \frac{1}{\# \mc T_k} \nu_k$.
\bl \label{lemma5}
Suppose $\mu$ is a limit measure of the sequence of probability measures $\mu_k$. 
Then $\mu (F) = 1$.
\el
\bp
For any fixed $l$ and all $p \geq 0$, $\mu_{l+p} (F_{l}) = 1$ since $\mu_{l+p} (F_{l+p}) = 1$ and $F_{l+p} \subseteq F_{l}$. 
Suppose $\mu = \lim_{k \ra \infty} \mu_{l_k}$ for some $l_k \ra \infty$, then $\mu(F_l) \geq \limsup_{k \ra \infty} \mu_{l_k} (F_l) = 1$. It follows that $\mu (F) = \lim_{l \ra \infty} \mu (F_{l}) = 1$.
\ep
In fact, the measures $\mu_k$ converge. However, by using our version of the Entropy Distribution Principle, we do not need to use this fact and so we omit the proof (which goes like lemma 5.4 of \cite{TV}).  

Let $\mc B := B_n (q, \epsilon)$ be an arbitrary ball which intersects $F$. Let $k$ be the unique number which satisfies $t_k \leq n < t_{k+1}$. Let $j \in \{0, \ldots, N_{k+1} -1 \}$ be the unique number so
\[ 
t_k + n_{k+1}j \leq n < t_k + n_{k+1}(j+1).
\]
We assume that $j \geq 1$ and leave the details of the simpler case $j=0$ to the reader. The following lemma reflects the restriction on the number of points that can be in $\mc B \cap \mc T_{k+p}$.
\bl\label{lemma7}
For $p \geq 1$, $\mu_{k+p} ( \mc B) \leq (\# \Tk )^{-1} (\#\mc S_{k+1})^{-j}$
\el

\begin{proof} 
First we show that $\mu_{k+1} ( \mc B) \leq (\# \Tk )^{-1} (\#\mc S_{k+1})^{-j}$. We require an upper bound for the number of points in $\Tkk \cap \mc B$.
If  $\mu_{k+1} (\mc B) > 0$, then $\Tkk \cap \mc B \neq \emptyset$. Let $z = z(\ul x, \ul x_{k+1}) \in \Tkk \cap \mc B$ where $\ul x \in S_1^{N_1} \times \ldots \times S_k^{N_k}$ and $\ul x_{k+1} \in S_{k+1}^{N_{k+1}}$. Let
\[
\mc A_{\ul x; x_1, \ldots, x_j} = \{ z(\ul x, y_1, \ldots, y_{N_{k+1}}) \in \Tkk : x_1 = y_1, \ldots, x_j = y_j \}.
\]
We suppose that $z^\prime = z(\ul y, \ul y_{k+1})\in \Tkk \cap \mc B$ and show that $z^\prime \in \mc A_{x; x_1, \ldots, x_j}$. We have $d_n(z, z^\prime)<2 \epsilon$ and we show that this implies $x_l = y_l$ for $l \in \{1, 2, \ldots, j\}$ (the proof that $\ul x = \ul y$ is similar). 
Suppose that $y_l \neq x_l$ and let $a_l = t_k + (l-1)(n_{k+1})$. There exists $\Lambda_1, \Lambda_2 \in I(g; n_{k+1}, \epsilon /2^{k+1})$ such that 
\[
d_{\Lambda_1} (f^{a_l} z, x_l ) < \frac{\epsilon}{2^{k+1}}  \mbox{ and }  d_{\Lambda_2} (f^{a_l} z^\prime, y_l ) < \frac{\epsilon}{2^{k+1}}.
\] 
Let $\Lambda = \Lambda_1 \cap \Lambda_2$. Since $\Lambda \in I(2g; n_{k+1}, \epsilon/2^{k+1})$, we have $d_\Lambda (x_l , y_l) >4 \epsilon$.
We have
\beq
d_n (z, z^\prime) &\geq& d_\Lambda (f^{a_l}z, f^{a_l}z^\prime) \\
&\geq& d_\Lambda (x_l, y_l) - d_\Lambda (f^{a_l}z, x_l) - d_\Lambda (f^{a_l}z^\prime, y_l) \geq 3\epsilon,
\eeq
which is a contradiction.
Thus, we have
\beq
\nu_{k+1} (\mc B) \leq \# \mc A_{x; x_1, \ldots, x_j} = (\#S_{k+1})^{N_{k+1} - j},
\eeq
\[
\mu_{k+1} (\mc B) \leq (\# \mc T_{k+1})^{-1} (\#S_{k+1})^{N_{k+1} - j} = (\# \Tk )^{-1} (\#\mc S_{k+1})^{-j}
\]
Now consider $\mu_{k+p} ( \mc B)$. Arguing similarly to above, we have
\beq
\nu_{k+p} ( \mc B)& \leq &\# \mc A_{x; x_1, \ldots, x_j} (\# \mc S_{k+2})^{N_{k+1}} \ldots (\# \mc S_{k+p})^{N_{k+p}} 
\eeq
The desired result follows from this inequality by dividing by $\# \mc T_{k+p}$.
\end{proof}

By lemma \ref{lemma1}, we have 
\beq
\# \mc T_k (\# \mc S_{k+1})^j &\geq& \exp \{ (\htop (f)-4 \gamma) (N_1 n_1 + N_2 n_2 + \ldots + N_k n_k +j n_{k+1}) \}\\
&\geq& \exp \{ (\htop (f)-4 \gamma) n \}.
\eeq
Combining this with the previous lemma gives us
\[
\limsup_{l \ra \infty} \mu_{l} (B_n (q, \epsilon)) \leq \exp \{-n(\htop (f) - 4 \gamma)\}.
\] 
Applying the Entropy Distribution Principle, 
we have
\[
\htop(F, \epsilon) \geq \htop (f) - 4 \gamma.
\] 
Since $\gamma$ and $\epsilon$ were arbitrary and $F \subset \widehat X(\varphi, f)$, we have $\htop (\widehat X(\varphi, f)) = \htop (f)$.
\ep
\section{The $\beta$-transformation} \label{beta.4}
In this section, let $X = [0,1)$. For any fixed $\beta >1$, we consider the $\beta$-transformation $f_\beta: X \mapsto X$ given by
\[
f_\beta(x) = \beta x \mbox{ }(\mbox{mod}1).
\]
As reference for the basic properties of the $\beta$-transformation, we recommend the introduction of the thesis of Maia \cite{Maia}. For $\beta \notin \IN$, let $b = [\beta]$ and for $\beta \in \IN$, let $b = \beta-1$. We consider the partition into $b+1$ intervals
\[
J_0 = \left[0, \frac{1}{\beta} \right), J_1 = \left[\frac{1}{\beta}, \frac{2}{\beta} \right), \ldots, J_b = \left[\frac{b}{\beta}, 1 \right).
\]
For $x \in [0,1)$, let $w(x) =(w_j(x))_{j=1}^\infty$ 
be the sequence given by $w_j (x) = i$ when $f^{j-1} x \in J_i$. We call $w(x)$ the greedy $\beta$-expansion of $x$ and we have
\[
x = \sum_{j=1}^{\infty} w_j (x) \beta^{-j}.
\]
The $\beta$-shift $(\Sigma_\beta, \sigma_\beta)$ is the subshift defined by the closure of all such sequences in $\prod_{i=1}^\infty \{0, \ldots, b \}$. 
Let $w(\beta) = (w_j(\beta))_{j=1}^\infty$ denote the sequence which is the lexicographic supremum of all $\beta$-expansions. 
The sequence $w(\beta)$ satisfies
\[
\sum_{j=1}^{\infty} w_j (\beta) \beta^{-j} = 1,
\]
so we call $w(\beta)$ the $\beta$-expansion of $1$. Parry showed that the set of sequences which belong to $\Sigma_\beta$ can be characterised as 
\[
w \in \Sigma_\beta \iff \sigma^k (w) \leq w(\beta) \mbox{ for all } k \geq 1,
\]
where $\leq$ is taken in the lexicographic ordering \cite{Pa}. Parry also showed that any sequence $w$ which satisfies $\sigma^k (w) \leq w$ is the $\beta$-expansion of $1$ for some $\beta>1$. 
The $\beta$-shift contains every sequence which arises as a greedy $\beta$-expansion and an additional point for every $x$ whose $\beta$-expansion is finite (i.e. when there exists $j$ so $w_i(x) = 0$ for all $i \geq j$). Thus the map $\pi: \Sigma_\beta \mapsto [0,1]$ defined by
\[
\pi( \ul w) = \sum_{j=1}^{\infty} w_j \beta^{-j}
\] 
is one to one except at the countably many points for which the $\beta$-expansion is finite. 

$\Sigma_\beta$ is typically not a shift of finite type (nor even a shift with specification) and the set of all $\beta$-shifts gives a natural and interesting class of subshifts. In the next section, we decribe in detail the known results on specification properties for the $\beta$-shift. The key fact for our analysis is that every $\beta$-shift has the almost specification property \cite{PfS2}. 
We have (p.179 of \cite{Wa}) that $\htop(\sigma_\beta) = \log \beta$.

\begin{theorem} \label {theorem.a.0.2}
For $\beta >1$, let $f_\beta: X \mapsto X$ be the $\beta$-transformation, $f_\beta (x) = \beta x (\mod 1)$. Let $\varphi \in C([0,1])$ and assume that the irregular set for $\varphi$ is non-empty (ie. $\widehat {X} (\varphi, f_\beta) \neq \emptyset$), then $h_{top} (\widehat {X} (\varphi, f_\beta)) = \log \beta$.
\end{theorem}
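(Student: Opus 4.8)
The plan is to transfer the problem to the $\beta$-shift $(\Sigma_\beta, \sigma_\beta)$, where Theorem \ref{theorem.a.0.1} applies directly, and then push the resulting entropy estimate back down to the interval through the projection $\pi$. First I would lift $\varphi$ to the continuous function $\tilde\varphi := \varphi \circ \pi \in C(\Sigma_\beta)$. Since $\pi$ is a semiconjugacy, $\pi \circ \sigma_\beta = f_\beta \circ \pi$, and hence $S_n \tilde\varphi = (S_n \varphi)\circ \pi$ for every $n$. In particular, the Birkhoff average of $\tilde\varphi$ at $\ul w$ converges if and only if the Birkhoff average of $\varphi$ at $\pi(\ul w)$ converges.

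Next I would record two elementary observations that identify the irregular sets off a negligible set. Any $x$ with a finite $\beta$-expansion eventually maps to $0$ under $f_\beta$, so $S_n\varphi(x)/n \to \varphi(0)$ and such $x$ cannot be irregular; therefore $\widehat X(\varphi, f_\beta) \subseteq Y$, where $Y = X \setminus \bigcup_i f_\beta^{-i}(0)$ is precisely the set of points with a unique (non-terminating) expansion. On $Y$ the projection $\pi$ is injective, and combined with the Birkhoff-sum identity this gives $\pi^{-1}(\widehat X(\varphi, f_\beta)) \cap \pi^{-1}(Y) \subseteq \widehat\Sigma_\beta(\tilde\varphi, \sigma_\beta)$ and, conversely, that any irregular point for $\varphi$ has an irregular preimage for $\tilde\varphi$.

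With this in place I would run the main argument on the shift. Since $\widehat X(\varphi, f_\beta) \neq \emptyset$, taking any irregular $x$ and its greedy expansion $w(x)$ produces a point of $\widehat\Sigma_\beta(\tilde\varphi, \sigma_\beta)$, so the shift irregular set is non-empty. As $\Sigma_\beta$ is compact and $\sigma_\beta$ is continuous, the remark following Lemma \ref{equiv} then yields condition (b), i.e. $\inf_\mu \int \tilde\varphi \, d\mu < \sup_\mu \int \tilde\varphi\, d\mu$ over $\mc M_{\sigma_\beta}(\Sigma_\beta)$. Every $\beta$-shift has the almost specification property, so Theorem \ref{theorem.a.0.1} applies and gives $\htop(\widehat\Sigma_\beta(\tilde\varphi, \sigma_\beta), \sigma_\beta) = \htop(\sigma_\beta) = \log\beta$.

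Finally I would transfer this entropy to the interval. The crucial lemma is that $\pi$ preserves Bowen--Pesin--Pitskel entropy of subsets of $\pi^{-1}(Y)$: $\htop(Z, \sigma_\beta) = \htop(\pi(Z), f_\beta)$ for $Z \subseteq \pi^{-1}(Y)$. This rests on the geometric correspondence that a cylinder $C_n(\ul w)$ projects to an interval of length comparable to $\beta^{-n}$, which is in turn sandwiched between dynamical balls $B_{n\pm c}(x, \epsilon)$ for $f_\beta$: projecting an efficient cylinder cover of $Z$ yields a dynamical-ball cover of $\pi(Z)$ of comparable weight, while pulling back an efficient dynamical-ball cover of $\pi(Z)$ meets only boundedly many cylinders of comparable length because $\pi$ is injective on $Y$, so the sub-exponential correction factors vanish in the entropy limit. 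Applying the lemma to $Z = \pi^{-1}(\widehat X(\varphi, f_\beta))\cap \pi^{-1}(Y)$, and noting that the discarded set $\pi^{-1}(X\setminus Y)$ is countable and hence of zero entropy, gives $\htop(\widehat X(\varphi, f_\beta), f_\beta) = \htop(\widehat\Sigma_\beta(\tilde\varphi, \sigma_\beta), \sigma_\beta) = \log\beta$. I expect this last transfer to be the main obstacle: making the passage between cylinders and dynamical balls precise in the presence of the discontinuities of $f_\beta$, and controlling the bounded non-injectivity of $\pi$, is exactly where the care is needed, whereas the reduction to the shift and the invocation of Theorem \ref{theorem.a.0.1} are essentially routine given the tools already developed.
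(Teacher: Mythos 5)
Your proposal is correct and follows essentially the same route as the paper: lift $\varphi$ to $\overline\varphi = \varphi\circ\pi \in C(\Sigma_\beta)$, use non-emptiness of the irregular set together with the remark after lemma \ref{equiv} and the almost specification property of every $\beta$-shift to apply theorem \ref{theorem.a.0.1} on $(\Sigma_\beta,\sigma_\beta)$, and then transfer the entropy back through $\pi$, which is injective off the countable set coming from finite expansions (the paper's $\Sigma_\beta^\prime$ plays the role of your $\pi^{-1}(Y)$, and countable sets carry zero entropy in both arguments). The only caveat is that $\pi\circ\sigma_\beta = f_\beta\circ\pi$ holds on $\Sigma_\beta^\prime$ rather than on all of $\Sigma_\beta$, but since $\pi^{-1}(Y)\subseteq\Sigma_\beta^\prime$ this does not affect your reasoning; your cylinder-versus-dynamical-ball sketch simply makes explicit the entropy-transfer step that the paper asserts without detail.
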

\bp
Let $\Sigma_\beta^\prime$ denote the set of sequences which arise as $\beta$-expansions. Recall that $\Sigma_\beta \setminus \Sigma_\beta^\prime$ is a countable set and the restriction of $\pi$ to $\Sigma_\beta^\prime$ is a homeomorphism satisfying $\pi \circ \sigma_\beta = f_\beta \circ \pi$. 
Thus, if $Z \in \Sigma_\beta$ and $Z^\prime := Z \cap \Sigma_\beta^\prime$, we have
\[
\htop(Z, \sigma_\beta) =\htop(Z^\prime, \sigma_\beta) = \htop (\pi(Z^\prime), f_\beta).
\]
Suppose $\varphi:[0,1] \mapsto \IR$ satisfies $\widehat X(\varphi, f_\beta) \neq \emptyset$. Let $x \in \widehat X(\varphi, f_\beta)$ and let $w(x)$ be its $\beta$-expansion. We let $\overline \varphi \in C(\Sigma_\beta)$ be the unique continuous function which satisfies $\overline \varphi = \varphi \circ \pi$ on $\Sigma_\beta^\prime$ (this exists because we assumed $\varphi$ to be continuous on $[0,1]$). It is clear that $w(x) \in \widehat {\Sigma_\beta}(\overline \varphi, \sigma_\beta)$. Since the dynamical system $(\Sigma_\beta, \sigma_\beta)$ satisfies the almost specification property, it follows from theorem \ref{theorem.a.0.2} that $\htop (\widehat {\Sigma_\beta}(\overline \varphi, \sigma_\beta)) = \log \beta$. Since $\pi (\widehat {\Sigma_\beta}(\overline \varphi, \sigma_\beta) \cap \Sigma_\beta^\prime) = \widehat X(\varphi, f_\beta)$, it follows that $\htop (\widehat X(\varphi), f_\beta) = \log \beta$.
\ep
\subsection{$\beta$-transformations and specification properties} \label{beta.4.1}
There is a simple presentation of $\Sigma_\beta$ by a labelled graph $\mc G_\beta$ due to Blanchard and Hansel \cite{BH}.  We describe the construction of $\mc G_\beta$ when the $\beta$-expansion of $1$ is not eventually periodic. We refer the reader to \cite{PfS2} for the slightly different construction required when the $\beta$-expansion of $1$ is eventually periodic (in this case, $\Sigma_\beta$ is a sofic shift \cite{BM} and therefore has specification).

Let $v_1, v_2, \ldots$ be a countable set of vertices. We draw a directed edge from $v_i$ to $v_{i+1}$ and label it with the value $w_i(\beta)$ for all $i \geq1$. 
If $w_i(\beta) \geq 1$, we draw a directed edge from $v_i$ to $v_{1}$ labelled with the value $0$. If $b=1$, the construction is complete. If $b>1$, then for all $j \in \{2, \ldots, b\}$ and all $w_i(\beta) \geq j$, we draw a directed edge from $v_i$ to $v_{1}$ labelled with the value $j-1$. Note that if $w_i(\beta)=0$, the only edge which starts at $v_i$ is the edge from $v_i$ to $v_{i+1}$ labelled by $0$, and if $w_i(\beta)\neq0$ there is always an edge from $v_i$ to $v_1$.
We have $w \in \Sigma_\beta$ iff $w$ labels an infinite path of directed edges of $\mc G_\beta$ which starts at the vertex $v_1$. The following figure depicts part of the graph $\mc G_\beta$ for a value of $\beta$ satisfying $(w_j(\beta))_{j=1}^6 = (2, 0,1,0,0,1)$.

\begin{figure}[h]
       \begin{center}
                          \LargeState\SmallPicture\VCDraw{
                      \begin{VCPicture}{(-2,-7)(20,2)}
                                
                               \State[v_1]{(0,0)}{v1}
                                \State[v_2]{(3,0)}{v2}
                                \State[v_3]{(6,0)}{v3}
                               \State[v_4]{(9,0)}{v4}
                           \State[v_5]{(12,0)}{v5}
                               \State[v_6]{(15,0)}{v6}
                           \State[v_7]{(18,0)}{v7}
                      
                   \EdgeL{v1}{v2}{}\taput{2}
                               \EdgeL{v2}{v3}{}\taput{0}
                                \EdgeL{v3}{v4}{}\taput{1}
                               \EdgeL{v4}{v5}{}\taput{0}
                               \EdgeL{v5}{v6}{}\taput{0}
                                \EdgeL{v6}{v7}{}\taput{1}
                               
                                \cnode*(0,-5){0pt}{p0}
                               \cnode*(13.5,-5){0}{p1}
                                \nccurve[angleA=-90,angleB=60,linestyle=dashed]{->}{v6}
{p1}\taput[npos=.7]{0}
                               \nccircle[angleA=180,nodesep=5pt]{<-}{v1}{.7cm}\taput[npos=.7]{0}
                               \nccircle[angleA=180,nodesep=5pt]{<-}{v1}{1.6cm}\taput[npos=.7]{1}
                               \nccurve[angleA=-90,angleB=0]{-}{v3}{p0}\taput[npos=.7]{0}
                           \nccurve[angleA=180,angleB=180,ncurv=1.5]{->}{p0}{v1}
                             \cnode*(21,0){0pt}{p3}
                           \ncline[linestyle=dashed]{v7}{p3}
                                
                     \end{VCPicture}}
                
        \end{center}
        
 \end{figure}


An arbitrary subshift $\Sigma$ on $b+1$ symbols is a closed shift-invariant subset of $\prod_{i=1}^{\infty} \{0,\ldots, b\}$.
We define $v$ to be admissible word of length $n \geq 1$ for $\Sigma$ if there exists $x\in \Sigma$ such that $v = (x_1, \ldots, x_{n})$. 
The specification property of definition \ref{3a} can easily be seen to be equivalent to the following property in the case of an arbitrary subshift. 
\begin{definition}
A subshift $\Sigma$ has the specification property if there exists $M>0$ such that for any two admissible words $w_1$ and $w_2$, there exists a word $w$ of length less than $M$ such that $w_1 w w_2$ is an admissible word. 
\end{definition}
We now return to the $\beta$-shift $\Sigma_\beta$. Define 
\[
z_n (\beta) = \min \{i \geq 0: w_{n+i} (\beta) \neq 0 \}.
\]
Equivalently, $z_n (\beta) +1$ is the minimum number of edges required to travel from $v_n$ to $v_1$. The $\beta$-shift fails to have the specification property iff `blocks of consecutive zeroes in the $\beta$ expansion of $1$ have unbounded length', ie. if $z_n (\beta)$ is unbounded \cite{BM}. Consider concatenations of the word $c_n:= (w_1(\beta), \ldots, w_n(\beta))$ with some other admissible word $v$. We can see from the graph $\mc G_\beta$ that the length of the shortest word $w$ such that $c_n w v$ is an admissible word is $z_n(\beta)$ (the word $w$ is a block of zeroes of length $z_n(\beta)$). Now for $x \in \Sigma_\beta$, we define $z_n (x) \geq 0$ to be the length of the shortest word $w$ required so that for any admissible word $v$, $(x_1, \ldots, x_n) w v$ is an admissible word. Note that for all $x \in \Sigma_\beta$, $z_n( x) \leq \max\{ z_i(\beta) : 1\leq i \leq n\}$. Thus $\Sigma_\beta$ has specification iff $z_n(\beta)$ is bounded. Buzzi shows that the set of $\beta$ for which this situation occurs has Lebesgue measure $0$. 

Pfister and Sullivan \cite{PfS2} used the graph $\mc G_\beta$ to observe that every $\beta$-shift has the almost specification property. Their strategy is to `jump ship' on the last non-zero entry of an admissible word. More precisely, every $\beta$-shift has the following property. Given any admissible word $w$, there is a word $w^\prime$ which differs from $w$ only by one symbol, such that $w^\prime v$ is admissible for every admissible word $v$. The modified word  $w^\prime$ is given by replacing the last non-zero entry of the word $w$ by a $0$. This property is best seen from inspection of the graph $\mc G_\beta$ and is the content of proposition 5.1 of \cite{PfS2}. It can easily be seen that this property implies the almost specification property. 



\subsection{Hausdorff dimension of the irregular set for the $\beta$-shift} We equip $\Sigma_\beta$ with the metric $d_\beta(x, y) = \frac{1}{\beta^n}$ where $n$ is the smallest integer such that $x_{n+1} \neq y_{n+1}$.  
Let $C_n (x) = \{ y \in \Sigma_\beta : x_i = y_i \mbox{ for } i = 0, \ldots, n-1\}$.  For $Z \subset \Sigma_\beta$, we define the diameter of the set to be $\mbox{Diam}(Z) = |Z| = \sup \{d_\beta (x, y) : x,y \in Z\}$. We have
\[
\mbox{Diam}(C_n (w(\beta))) = \frac{1}{\beta^{n + z_{n+1}(\beta)}},
\]
\[
\frac{1}{\beta^{n}} \geq \mbox{Diam}(C_n (x)) \geq \frac{1}{\beta^{n + z_{n+1}(x)}}. 
\]
Because there is no uniform lower bound on the diameter of a cylinder when $z_n(\beta)$ is unbounded, it is plausible that the relation between Hausdorff dimension and topological entropy could not be straightforward. Indeed, because of this potential difficulty, in \cite{PfS2, PfS}, it seems that Pfister and Sullivan establish a relationship between topological entropy and Hausdorff dimension only for $\beta$-shifts which satisfy an additional hypothesis on $w(\beta)$ (which holds for Lebesgue almost every $\beta$).  In fact,  the following lemma tells us that the relationship  between Hausdorff dimension and topological entropy is straightforward for every $\beta$-shift. While this lemma  is surely known to some experts as a folklore theorem and is a corollary of more general results of Climenhaga \cite[Theorem 2.4]{Cl},  we include an elementary direct proof to clarify the situation.  For Hausdorff dimension, we fix the notation

\[
H(Z, \alpha, \delta) = \inf \{ \sum |B(x_i, \delta_i)|^\alpha : Z \subseteq \bigcup_i B(x_i, \delta_i), \delta_i \leq \delta \}.\,
\]
$H(Z, \alpha) = \lim_{\delta \ra 0} H(Z, \alpha, \delta)$ and $\dimh (Z) = \sup \{ \alpha :H(Z, \alpha) = \infty\}$. We sometimes write $\dimh (Z, d)$ in place of $\dimh(Z)$ when we wish to emphasise the dependence on the metric $d$. We note that the map $x \mapsto w(x)$ is bi-Lipshitz with respect to the metric $d_\beta$ and thus for $Z \subset [0,1)$, $\dimh (Z) = \dimh (\pi^{-1} (Z), d_\beta)$.

\bl \label{dimhbeta}
For arbitrary $Z \subset \Sigma_\beta$, we have $ \log \beta \dimh (Z) = \htop(Z)$.
\el
\bp
That $\log \beta \dimh (Z) \leq \htop(Z)$ is a standard argument which follows from the fact that $\mbox{Diam}(C_n (x)) \leq \frac{1}{\beta^{n}} $.  For the other inequality, let $\gamma_N = \beta^{-N}$, and consider a cover of $Z$ by metric balls $B(x_i, \delta_i)$ with $\delta_i < \gamma_N$. We modify the radii of the balls to create a `better' cover.  For each $\delta_i$, let $n_i$ be the unique natural number so that $\beta^{-(n_i+1)} < \delta_i \leq \beta^{n_i}$. Then $n_i \geq N$ for all $i$ and $|B(x_i, \delta_i)| \leq \beta^{-(n_i+1)}$. If $|B(x_i, \delta_i)| = \beta^{-(n_i+1)}$, let $\rho_i = \delta_i$ and $m_i=1$. If not, $|B(x_i, \delta_i)| = \beta^{-(n_i+m_i)}$ for some $m_i \geq2$. In this case, let $\rho_i = \beta^{-(n_i+m_i-1)}$. We have $\rho_i \leq \delta_i$ and $B(x_i, \delta_i) = B(x_i, \rho_i) = C_{n_i+m_i}(x)$. The collection $\Gamma  = \{C_{n_i+m_i}(x_i)\}$ covers $Z$, and thus
\begin{align*}
\sum  |B(x_i, \delta_i)|^\alpha = \sum  |B(x_i, \rho_i)|^\alpha = \sum \beta^{-(n_i+m_i)\alpha} &= \sum \exp (-\alpha (n_i + m_i) \log \beta) \\ & = Q(Z, \alpha \log \beta, \Gamma).
\end{align*}
Taking infimums, we have $H(Z, \alpha, \gamma_N) \geq M(Z, \alpha \log \beta, N) $. Taking the limit as $N \ra \infty$, it follows that if $m(Z, \alpha \log \beta) = \infty$, then $H(Z, \alpha) = \infty$ and the inequality follows.
\ep
We obtain the following theorem as an immediate corollary of lemma \ref{dimhbeta} and theorems \ref {theorem.a.0.1} and \ref{theorem.a.0.2}.

\bt \label{theorem.ae.beta}
For every $\beta>1$, 

(1) If $\varphi \in C(\Sigma_\beta)$ and $\widehat \Sigma_\beta (\varphi, \sigma_\beta) \neq \emptyset $, then $ \dimh (\widehat \Sigma_\beta (\varphi, \sigma_\beta)) =1$,

(2) If $\varphi \in C([0,1])$ and $\widehat X (\varphi, f_\beta) \neq \emptyset$, then $ \dimh ( \widehat X (\varphi, f_\beta)) =1$.
\et

\begin{remark} 
We may also consider the Billingsley dimension $\mbox{Dim}_{\nu}$ of the irregular set (with respect to a reference measure $\nu$) \cite{PfS2, Caj}. We remark that when $\nu$ is equivalent to Lebesgue measure, then $\mbox{Dim}_{\nu} = \dimh$. Every $\beta$-transformation has an invariant measure $\nu_\beta$ which is equivalent to Lebesgue (and is the measure of maximal entropy). Therefore, it is a corollary of theorem \ref{theorem.ae.beta} that if $\varphi \in C(\overline X)$ and $\widehat X(\varphi, f_\beta) \neq \emptyset$, then $\mbox{Dim}_{\nu_\beta} (\widehat X(\varphi, f_\beta)) =1$.
\end{remark}

 

\subsection{An alternative approach for $\beta$-shifts} \label{alternative}
We describe an alternative proof of Theorem \ref{theorem.ae.beta} based on approximating $\beta$-shifts from inside by $n$-step shifts of finite type. We discuss the relative merits of the two approaches in \S \ref{sfc}. The key quoted result in this method is a version of theorem \ref{theorem.a.0.2} in the special case of $n$-step Markov shifts. We note that the `almost specification' method of proof applies in far greater generality and a self-contained version of the proof described below would be comparable in length.  

\bp
Recall that any sequence $(a_n)$ on a finite number of symbols which satisfies $\sigma^k (a_n) \leq (a_n)$ for all $k \geq 0$ arises as $w (\beta)$ for some $\beta >1$.  Fix $\beta >1$ and write $w_i := w_i (\beta)$. Let $\beta(n)$ be the simple $\beta$-number corresponding to the sequence $(w_1, w_2, \ldots, w_n, 0, 0, 0, \ldots)$.
An elementary argument \cite{Pa} shows that $\beta(n) \ra \beta$. It is clear that $\Sigma_{\beta(n)}$ can be considered to be a subsystem of $\Sigma_\beta$ (the subshift $\Sigma_{\beta(n)}$ corresponds to the set of labels of edges of infinite paths that only visit the first $n$ vertices of $\mc G_\beta$).

Now suppose $\varphi \in C(\Sigma_\beta)$ is a function for which the irregular set is non-empty. Then there exists $x, y \in \Sigma_\beta$ such that 
\[
\limn \frac{1}{n} S_n \varphi(x) \neq \limn \frac{1}{n} S_n \varphi(y).
\]
Let $\delta>0$ and $N_1 \in \IN$ be such that for $n \geq N_1$,
\[
\left |\frac{1}{n} S_n \varphi(x) - \frac{1}{n} S_n \varphi(y) \right | > 4\delta.
\]
Pick $N_2$ sufficiently large that
\[
\sup \{|\varphi(w)-\varphi(v)|: w,v \in \Sigma_\beta,  w_i =v_i \mbox{ for } i=1, \ldots, N_2\} < \delta. 
\]
For any $n \geq N = \max\{N_1, N_2\}$, let us choose $x^\prime \in$ $C_N (x) \cap \Sigma_{\beta(n)}$ and $y^\prime \in$ $C_N(y) \cap \Sigma_{\beta(n)}$. We have for all $m \geq N$,
\[
\left |\frac{1}{m} S_m \varphi(x^\prime) - \frac{1}{m} S_m \varphi(y^\prime) \right | > 2\delta.
\]
Thus the restriction of $\varphi$ to $\Sigma_{\beta(n)}$ does not have trivial spectrum of Birkhoff averages and by lemma \ref{equiv}, 
our main theorem gives us
\begin{equation} \label{5.1}
\htop(\widehat \Sigma_{\beta(n)} (\varphi, \sigma_{\beta(n)})) = \htop(\sigma_{\beta(n)}) = \log \beta(n).
\end{equation}
We remark that $\Sigma_{\beta(n)}$ is an $n$-step Markov shift (and thus has specification), 
so formula (\ref{5.1}) also follows for H\"older continuous $\varphi$ from theorem 9.3.2 of \cite{Bad}. Note that $\widehat \Sigma_{\beta(n)} (\varphi, \sigma_{\beta(n)}) \subset \widehat \Sigma_\beta (\varphi, \sigma_\beta)$.

By lemma \ref{dimhbeta}, any subset $Z \subset \Sigma_{\beta(n)}$ satisfies $\dimh (Z, d_\beta) = \htop(Z) / \log \beta$. 
In particular, $\dimh( \Sigma_{\beta(n)}, d_\beta) = \log (\beta(n)) / \log \beta$.
Thus
\[
\dimh(\widehat \Sigma_\beta (\varphi, \sigma_\beta), d_\beta) \geq \sup \left \{ \dimh (\widehat \Sigma_{\beta(n)} (\varphi, \sigma_{\beta(n)}), d_\beta)\right \} = \sup \left \{\frac{\log \beta(n)}{\log \beta}\right \} = 1.
\]
\ep

\subsection{Some final comments} \label{sfc}
We give some closing remarks, comparing the `almost specification' method of theorem \ref{theorem.a.0.1} with the `approximation from inside' method of \S \ref{alternative}. 
\begin{remark}
An almost sofic shift \cite{LM} is defined to be a shift space $\Sigma$ for which one can find a sequence of subshifts of finite type $\Sigma_n$ such that $\Sigma_n \subset \Sigma$ and $\limn \htop (\Sigma_n, \sigma) = \htop (\Sigma, \sigma)$. By our previous reasoning, every $\beta$-shift is almost sofic. We remark that the proof of theorem \ref{alternative} shows that if $(\Sigma, \sigma)$ is an almost sofic shift, $\varphi \in C(\Sigma)$ and $\widehat \Sigma (\varphi, \sigma) \neq \emptyset $, then $ \htop (\widehat \Sigma (\varphi, \sigma)) = \htop (\sigma)$.
\end{remark}
\begin{remark} 
We point out that theorem \ref{theorem.a.0.1} applies to systems where the approach of \S \ref{alternative} cannot apply, even in the setting of shift spaces. It is not too difficult to construct a shift space $(\Sigma, \sigma)$ such that

(1) $(\Sigma, \sigma)$ has positive topological entropy but no periodic points;

(2) $(\Sigma, \sigma)$ satisfies almost specification but not specification. 

We give the idea of the construction. We define $\Sigma := \bigcap_{n \geq1} \Sigma_n$, where $\Sigma_n \subset \prod_{i=1}^\infty \{0, 1\}$ are suitably chosen topologically mixing shifts of finite type satisfying $\Sigma_{n+1} \subset \Sigma_n$ for all $n \geq 1$. For $i \geq 1$, let $\epsilon_i = \frac{1}{2^{i+1}}\log 2$ and $N_i$ be an increasing sequence of integers to be chosen later.

Recall that a finite collection of forbidden words defines a shift of finite type. For a word $u$ and an integer $N$, we use the notation $u^N$ to denote the concatenation of $N$ copies of $u$. Let $\mc F_1$ denote the set of words
\[
\mc F_1 = \{(1)^{N_1}, (0)^{N_1}\}
\]
and define $\Sigma_1$ as the shift defined by the forbidden words $\mc F_1$. It follows easily from theorem 7.13 of \cite{Wa} that if we choose $N_1$ to be large enough, then $\htop (\Sigma_1, \sigma) > \log 2 - \epsilon_1$. 
It is clear that $\Sigma_1$ contains no fixed points. 

We define $\Sigma_k$ inductively as the shift defined by the collection of forbidden words $\bigcup_{i=1}^k \mc F_i$, where
\[
\mc F_k = \{ v^{N_k} : \mbox{ v is an admissible word of length $k$ in $\Sigma_{k-1}$ } \},
\]
with $N_k$ chosen large enough so that $\htop (\Sigma_k, \sigma) > \htop (\Sigma_{k-1}, \sigma) - \epsilon_i$. It is clear that $\Sigma_k$ contains no periodic points of period less than or equal to $k$. 

It is easily verified that $\htop( \Sigma, \sigma) = \limk \htop (\Sigma_k, \sigma) >0$. Since $\Sigma$ contain no periodic points, it contains no shifts of finite type. We claim that $(\Sigma, \sigma)$ satisfies almost specification but not specification and briefly explain why. The key fact is that if a word $w$ belongs to $\mc F_k$, we can change it to a word not in $\mc F_k$ by changing any one of its entries (from a $0$ to a $1$ or vice-versa). Furthermore, one can modify $w$ by changing a single entry in at least $|w|(1 - 2/N_1)$ positions so that the new word is not in $\bigcup_{i=1}^k \mc F_i$. This abundance of choice when we wish to adjust an inadmissible word to make it admissible gives rise to the almost specification property. 

It is not hard to find $\Sigma$-admissible words $u$ and $w_k$ such that $u v w_k$ is $\Sigma$-inadmissible for any word $v$ of length less than or equal to $k$. Hence, $(\Sigma, \sigma)$ does not have specification. However, by changing $u$ and $w_k$ at a small number of entries (relative to the length of $u$ and $w_k$), we can easily find new words $u^\ast$ and $w_k^\ast$ such that $u^\ast w_k^\ast$ is $\Sigma$-admissible. This leads to the almost specification property. 
\end{remark}

\begin{remark} 

We could study the class of systems $(X, f)$ such that for every $\epsilon >0$, there exists $n \in \IN$ and a compact $f^n$-invariant subset $Y \subset X$ satisfying

(1) $(Y, f^n)$ is a topological factor of a shift of finite type;

(2) $\htop (Y, f^n) > n (\htop (X, f) -\epsilon)$. 

We call $(Y, f^n)$ a horseshoe for $(X, f)$. 
To study irregular sets for $(X, f)$, it should suffice to study the intersection of the irregular set with the horseshoe. As demonstrated by our previous remark, systems with almost specification do not necessarily contain any horseshoes, so we could not prove theorem \ref{theorem.a.0.1} using this approach. Also, theorems on the existence of horseshoes typically require smoothness of the system (see, for example, theorem S.5.9 of \cite{KH}), whereas our `almost specification' approach is a topological approach to a topological question. However, we do note that examples exist that do not have specification but where an `approximation from inside' approach could yield results. For example, a continuous interval map which is not mixing contains horseshoes but does not have specification (see corollary 15.2.10 of \cite{KH}). Thus, the `almost specification' approach and the `approximation from inside' approach both have their own merits.  
\end{remark}
\begin{remark}
In \cite{Pet}, Petersen studies some interesting families of shift spaces which first arose in an applied setting. He shows that some of these are almost sofic and some are not. It would be interesting to investigate the almost specification property in this setting. 
\end{remark} 
\section*{Acknowledgements}
This work constitutes part of my PhD, which was supported by the EPSRC. I would like to thank my supervisors Mark Pollicott and Peter Walters for many useful discussions and reading draft versions of this work, for which I am most grateful. I would also like to thank Thomas Jordan for comments on a draft version of this work and Charles-Edouard Pfister for a useful discussion.

\bibliographystyle{amsplain}
\bibliography{master}



\end{document}